\newtheorem{theorem}{Theorem}[section]
\newtheorem{lemma}[theorem]{Lemma}
\newtheorem{proposition}[theorem]{Proposition}
\newtheorem{corollary}[theorem]{Corollary}
\theoremstyle{definition}
\newtheorem{definition}[theorem]{Definition}
\newcommand{\lang}{\begin{picture}(5,7)
\put(1.1,2.5){\rotatebox{45}{\line(1,0){6.0}}}
\put(1.1,2.5){\rotatebox{315}{\line(1,0){6.0}}}
\end{picture}\;}
\newcommand{\rang}{\;\begin{picture}(5,7)
\put(.1,2.5){\rotatebox{135}{\line(1,0){6.0}}}
\put(.1,2.5){\rotatebox{225}{\line(1,0){6.0}}}
\end{picture}}
\newcommand{\RR}{\mathbb{R}}
\newcommand{\NN}{\mathbb{N}}
\newcommand{\ZZ}{\mathbb{Z}}
\newcommand{\arr}{\longrightarrow}
\theoremstyle{remark}
\newtheorem{remark}[theorem]{Remark}
\numberwithin{equation}{section}
\author{Giovanni Alessandrini}
\address{Dipartimento di Matematica e Geoscienze, Universit\`a degli Studi di Trieste, Italy}
\email{alessang@units.it}
\thanks{The first author was supported  in part by Universit\`a degli Studi di Trieste, FRA 2016}
\author{Andrea Scapin}
\address{Dipartimento di Matematica e Geoscienze, Universit\`a degli Studi di Trieste, Italy}
\email{andrscapin@gmail.com}
\title[depth dependent resolution in eit]{depth dependent resolution in\\Electrical impedance tomography}
\begin{document}

\subjclass[2010]{Primary 35R30; Secondary 35R25, 30C35.}
\maketitle

\begin{abstract}
We consider the two-dimensional version of Calder\`on's problem. When the D-N map is assumed to be known up to an error level $\varepsilon_0$, we investigate how the resolution in the determination of the unknown conductivity deteriorates the farther one goes from the boundary. We provide explicit formulas for the resolution, which apply to conductivities which are perturbations, concentrated near an interior point $q$, of the homogeneous conductivity.
\end{abstract}

\section{Introduction}
We consider the well-known Calder\'{o}n's inverse boundary value problem, also known as Electrical Impedance Tomography. Given $K\ge 1$, and $\gamma \in L^{\infty}(\Omega)$, such that $K^{-1}\le \gamma\le K$, the so-called Dirichlet-to-Neumann map
\[ \Lambda_{\gamma}: H^{1/2}(\partial \Omega) \arr H^{-1/2}(\partial \Omega) \]
is the operator which associates to each $\varphi \in H^{1/2}(\partial \Omega)$ the conormal derivative $\gamma \partial_{\nu}u \in H^{-1/2}(\partial \Omega)$, where $u$ is the weak solution to the Dirichlet problem
\begin{equation}\label{basicDpb}
 \left\{
\begin{array}{llllll}
\text{div}( \gamma \nabla u) =0 \ ,&\text{in}& \Omega \ ,&\\
u = \varphi \ ,&\text{on} &\partial \Omega\ .&
\end{array}
\right.
\end{equation}
Calder\'{o}n's problem asks for the determination of $\gamma$, given $\Lambda_{\gamma}$ \cite{Ca}. We refer to Uhlmann \cite{U} for a thorough review on the progress and on the state of the art for this problem.

It is well-known that this problem is ill-posed \cite{A88, A07} and that, assuming a-priori regularity bounds of any order on $\gamma$, the best possible stability of $\gamma$ in terms of $\Lambda_{\gamma}$ is of logarithmic type, Mandache \cite{Man}. See also \cite{Fa} for the latest result of stability under minimal a-priori assumptions in the two--dimensional case, and for an updated reference list.

On the other hand, under minimal regularity assumptions, it is known that the boundary values of $\gamma$ depend in a Lipschitz fashion on $\Lambda_{\gamma}$,
\cite{Sy-U-2, A88, Bro}. It is then natural to ask how the determination  of the values of $\gamma$ does deteriorate the deeper we go inside the domain $\Omega$.

In this direction we mention the result of Nagayasu, Uhlmann and Wang \cite{NUW} who consider two--valued conductivities of the form
\[ \gamma = 1 + ( c -  1) \chi_{D} \ , \; c>0  \  ,\]
when $\Omega = B_R(0) \subset \mathbb R^2$ and the domain $D$ is a small perturbation of a disk $B_r(0)$, $0<r<R$. Examining the linearization $\mathrm{d}\Lambda$ of the corresponding Dirichlet-to-Neumann map, they show that the dependence of the  infinitesimal domain variation in terms of  $\mathrm{d}\Lambda$ deteriorates when $r\to 0$ at a logarithmic rate.    

Also in this note, we shall treat the two-dimensional setting,  but we shall consider more general perturbations of the homogeneous conductivity $\gamma_0\equiv  1$, and, rather than examining stability,   we shall discuss a more crude notion of \emph{resolution}. 

Let us briefly illustrate here our notion of resolution. Given an error level $\varepsilon_0>0$ on the Dirichlet-to-Neumann map, we shall say that two conductivities $\gamma_1, \gamma_2$ are \emph{indistinguishable} if $\|\Lambda_{\gamma_1}-\Lambda_{\gamma_2}\|_{*} \le \varepsilon_0$. Here $\|\cdot\|_*$ denotes the appropriate $H^{1/2}(\partial \Omega) \arr H^{-1/2}(\partial \Omega)$ norm. Next, fixing a disk $B_{\rho}(q)\subset \Omega$, we consider the class $\Gamma_\Omega (\rho, q)$ of conductivities which are perturbations of the reference homogeneous conductivity $\gamma_0\equiv  1$, and which may differ from $\gamma_0$ only inside $B_{\rho}(q)$. We shall call \emph{resolution limit} at level $\varepsilon_0$, for the point $q$, the largest $\rho>0$ such that all conductivities in $\Gamma_\Omega (\rho, q)$ are indistinguishable.

We recall that a related notion of \emph{distinguishability} has been already introduced by Isaacson and Cheney in \cite{Isa,IC}.

The main result of this note is the explicit calculation of such a resolution limit for all $q\in \Omega$ in two specific geometrical settings. Namely, when $\Omega$ is the unit disk $B_1(0)$ and when $\Omega$ is the half plane $\mathbb{H}^+$. Such explicit formulas illustrate that the resolution deteriorates as the distance from the boundary increases.

Our approach is based on few elementary facts.

(I) When $\Omega=B_1(0)$ the resolution limit for the center $q=0$ can be explicitly computed by separation of variables, \cite{A88}.

(II) The quadratic form $\lang \Lambda_{\gamma}\varphi, \varphi \rang = \int_{\Omega}\gamma |\nabla u|^2$
where $u$ and $\varphi$ are as in \eqref{basicDpb} is invariant under conformal mappings.

(III) The quadratic form $\lang \Lambda_{\gamma}\varphi, \varphi \rang$ above is monotone with respect to the conductivity $\gamma$. This is a well-known fact in the theory of EIT and has been used in many instances in the past \cite{A89,AR98,Ike,KSS, ARS00}.

(IV)  The explicit classical description in terms of M\"obius transformations of the automorphisms of the disk and of the conformal mappings of the half space onto the disk enable to reinterpret the formula for the resolution limit in each point in $B_1(0)$ or in $\mathbb{H}^+$.

In particular, we shall see that the case of the half plane is especially instructing, because in  this case the resolution limit depends linearly on the depth.

We wish to mention that, while this paper was in preparation, the authors became aware of the preprint by Garde and Knudsen \cite{GK} where similar considerations are developed. It may be noticed, however that the present approach has some differences.

i) In \cite{GK} only two-phase perturbations of the reference homogeneous conductivity are considered, whereas here we are able to treat  any variable perturbation.

ii) In \cite{GK}   the error on the data is evaluated with respect to  the   $L^{2}\arr L^{2}$  norm, instead of the  $H^{1/2}\arr H^{-1/2}$ norm, as we do here. This last choice, besides being physically motivated, has the fundamental advantage of being conformally invariant.   

iii) Here we examine the case of the half plane, which may be especially suggestive in connection to geophysical applications.

In the next section 2 we shall introduce the functional framework necessary for our analysis. The specific feature that we emphasize is that we are allowed to treat with equal simplicity bounded and unbounded simply connected domains in the plane. Next we show the basic conformal invariance of the functional spaces just introduced and of the Dirichlet-to-Neumann map. Finally we rigorously formulate the notions of indistinguishability and  of resolution limit.

In section 3 we compute the resolution limit. First we treat the case of the resolution limit at the center of the disk. Next we compute the resolution limit at an arbitrary point in the disk.
We examine the asymptotic behavior of the resolution limit with respect to the relevant parameters: depth, error level and ellipticity.
We conclude with the formulas for the  half plane.

\section{Preliminaries.}

We shall use the standard identification of $\mathbb{R}^2$ with  $\mathbb{C}$. Depending on the circumstances, points in the plane shall be represented by pairs $x = (x_1 , x_2)$ of real numbers or by a single complex number $z$.

Let $\Omega$ be a simply connected domain in $\mathbb{R}^2$, whose boundary is $C^{1, \alpha}$, $0 < \alpha < 1$.

\medskip

Let $K \ge 1$. Throughout the paper we shall consider conductivities $\gamma \in L^\infty (\Omega)$ which satisfy the following ellipticity condition:
\begin{equation} \label{eq:condizione-ellitticita} K^{-1} \le \gamma \le K .\end{equation}
In $H^1_{\text{loc}} (\Omega)$ we consider the equivalence relation: $u \sim v$ if and only if $u - v$ is constant. We define $H^1_{\Diamond}(\Omega)$ as the set of equivalence classes $[u]_{\sim}$ such that $u \in H^1_{\text{loc}} (\Omega)$ satisfying $\int_\Omega | \nabla u |^2 < \infty$. On $H^1_{\Diamond}(\Omega)$ we consider the norm given by
\[ \| [u]_\sim \|^2 = \int_\Omega | \nabla u |^2 .\]
From now on we shall simply write $u$ instead of $[u]_\sim \in H^1_\Diamond(\Omega)$. Let us remark that similar conventions have already been used, see for instance \cite[Section 16.1.2]{AIM}.

\medskip

\noindent
The corresponding trace space is defined as follows
\[ H^{1/2}_\Diamond (\partial \Omega) = H^1_\Diamond (\Omega)/ H_0^1(\Omega) . \]
On $H^{1/2}_{\Diamond}(\partial \Omega)$ we consider the norm given by
\[  \| \varphi \|_{H^{1/2}_{\Diamond} (\partial \Omega)} = \inf_{\substack{u \in H^1_\Diamond (\Omega) \\ u |_{\partial \Omega} = \varphi}} \| \nabla u \|_{L^2(\Omega)}.\]
Let us denote
\[ \| \varphi \|_{1/2} = \| \varphi \|_{H^{1/2}_{\Diamond} (\partial \Omega)}. \]
Let $\gamma \in L^\infty(\Omega)$ satisfying \eqref{eq:condizione-ellitticita}. Let $u \in H^1_\Diamond (\Omega)$ be the weak solution to
\begin{equation} \label{eq:problema-variazionale} \left \{ \begin{split} \text{div} (\gamma \nabla u) = 0 , & \:\text{ in } \Omega \\ u = \varphi , & \:\text{ on } \partial \Omega\end{split} \right .\end{equation}
where $\varphi \in H^{1/2}_\Diamond (\partial \Omega)$. 

\medskip

By the Riesz representation theorem it is clear that the solution to \eqref{eq:problema-variazionale} exists and it is unique.
\begin{definition} We denote by $H^{-1/2} (\partial \Omega)$ the dual space to $H^{1/2}_\Diamond (\partial \Omega)$ and we denote by  $\lang \cdot , \cdot \rang$ the $L^2(\partial \Omega)$-based duality between these spaces. Then we define the D-N map as follows
\[ \Lambda_\gamma : H^{1/2}_\Diamond (\partial \Omega) \arr H^{-1/2} (\partial \Omega) \]
for every $\varphi_1 , \varphi_2 \in  H^{1/2}_\Diamond (\partial \Omega) $
\begin{equation} \label{eq:triangolo} \lang \Lambda_\gamma \varphi_1 , \varphi_2 \rang  = \int_\Omega \gamma \nabla u_1 \cdot \nabla v_2 , \end{equation}
where $u_1$ is the solution to \eqref{eq:problema-variazionale} satisfying the boundary condition $\varphi = \varphi_1$ and $v_2$ is any function in $H^1_\Diamond (\Omega)$ satisfying $v_2 |_{\partial \Omega} = \varphi_2$.
\end{definition}

\begin{lemma}[Conformal invariance] \label{lem:lem_1} Let $\Omega, \Omega'$ be two simply connected domains whose boundaries are $C^{1,\alpha}$ and let  $\omega : \Omega' \arr \Omega$ be a conformal map between them. Let $\gamma$ satisfying \eqref{eq:condizione-ellitticita}. Then for all $\varphi_1 , \varphi_2 \in H^{1/2}_\Diamond (\partial \Omega)$  we have
\[ \lang \Lambda_{\gamma} \varphi_1 , \varphi_2 \rang \;=\; \lang \Lambda_{\gamma \circ \omega} \psi_1 , \psi_2 \rang , \]
where $\psi_i = \varphi_i \circ \omega$, $i = 1, 2$.
\end{lemma}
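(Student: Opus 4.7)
The plan is to leverage the definition \eqref{eq:triangolo} of $\Lambda_\gamma$ via the Dirichlet bilinear form, together with the classical two-dimensional identity expressing conformal invariance of $\int \gamma\,\nabla u\cdot\nabla v$, and then perform the change of variables $z=\omega(z')$. The heart of the argument is a pointwise gradient identity; the rest is bookkeeping.

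First, I would let $u_1\in H^1_\Diamond(\Omega)$ be the solution to \eqref{eq:problema-variazionale} with boundary datum $\varphi_1$, and $v_2\in H^1_\Diamond(\Omega)$ any lift of $\varphi_2$. Set $\tilde u_1 = u_1\circ\omega$ and $\tilde v_2 = v_2\circ\omega$. Since $\Omega,\Omega'$ are $C^{1,\alpha}$ and simply connected, by the Kellogg--Warschawski theorem $\omega$ extends to a $C^{1,\alpha}$ diffeomorphism up to the boundary, so in particular the traces are $\tilde u_1|_{\partial\Omega'}=\psi_1$ and $\tilde v_2|_{\partial\Omega'}=\psi_2$. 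Because $\omega$ is conformal, its differential satisfies $(D\omega)(D\omega)^T=|\omega'|^2 I$, whence the chain rule yields the pointwise identity
\[
\nabla\tilde u_1(z')\cdot\nabla\tilde v_2(z') \;=\; |\omega'(z')|^2\,(\nabla u_1\cdot\nabla v_2)(\omega(z')),
\]
while the Jacobian of $\omega$ also equals $|\omega'|^2$. The two factors cancel and the usual change of variable produces
\[
\int_\Omega \gamma\,\nabla u_1\cdot\nabla v_2\,dz \;=\; \int_{\Omega'}(\gamma\circ\omega)\,\nabla\tilde u_1\cdot\nabla\tilde v_2\,dz'.
\]

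To conclude, I would verify that $\tilde u_1$ is the weak solution on $\Omega'$ to the transformed Dirichlet problem with conductivity $\gamma\circ\omega$ and boundary datum $\psi_1$. The boundary condition has just been noted; for the PDE one picks any test function $\phi\in H^1_0(\Omega')$, applies the displayed identity with $v_2$ replaced by $\phi\circ\omega^{-1}\in H^1_0(\Omega)$, and uses that $u_1$ is a weak solution in $\Omega$. Then \eqref{eq:triangolo} applied on $\Omega'$ identifies the right-hand side of the displayed equation with $\lang\Lambda_{\gamma\circ\omega}\psi_1,\psi_2\rang$, which is the statement.

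I do not anticipate any serious obstacle. The only points requiring a bit of care are that the pullback operation is well defined on the quotient spaces $H^1_\Diamond$ and $H^{1/2}_\Diamond$ (it is, since both the Dirichlet integral and additive constants are preserved by composition with $\omega$) and that the boundary regularity of the conformal map is strong enough to push the trace $\varphi_i$ forward to $\psi_i=\varphi_i\circ\omega$, which is exactly what Kellogg's theorem delivers under the $C^{1,\alpha}$ assumption.
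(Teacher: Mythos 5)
Your proof is correct and follows essentially the same route as the paper's: pull back by $\omega$, use the conformality identity $(D\omega)(D\omega)^T=|\omega'|^2 I$ so that the conformal factor cancels the Jacobian in the change of variables, and identify the transformed Dirichlet integral with $\lang \Lambda_{\gamma\circ\omega}\psi_1,\psi_2\rang$. You are in fact slightly more careful than the paper in explicitly verifying that $u_1\circ\omega$ is the weak solution of the transformed problem on $\Omega'$ (and that the pullback respects the quotient structure of $H^1_\Diamond$), steps the paper leaves implicit.
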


\begin{proof} Given $\varphi_1, \varphi_2 \in H^{1/2}_\Diamond (\partial \Omega)$ we consider $u_1 , u_2 \in H^{1}_\Diamond (\Omega)$ such that
\[ \left \{ \begin{split} \text{div} (\gamma \nabla u_i) = 0 ,& \:\text{ in } \Omega \\ u_i = \varphi_i , & \:\text{ on } \partial \Omega\end{split} \right . \]
Since $\partial \Omega , \partial \Omega'$ are $C^{1,\alpha}$, it is well-known that $\omega$ extends (with the same regularity) to  a diffeomorphism from $\overline{\Omega'}$ to $\overline{\Omega}$, $x = \omega(y)$. The Cauchy-Riemann equations can be written as follows
\[ \left ( \frac{\partial y}{\partial x} \right ) \left ( \frac{\partial y}{\partial x} \right )^T = \left | \text{det } \frac{\partial y}{\partial x} \right | I ,\] 
hence
\[ \begin{split} \lang \Lambda_{\gamma}  \varphi_1 , \varphi_2 \rang &= \int_{\Omega} \gamma(x) \: \nabla_x u_1  \cdot \nabla_x u_2 \text{ d} x \\ & = \int_{\Omega'} \gamma (\omega(y)) \: \frac{\left ( \frac{\partial y}{\partial x} \right )^T \nabla_y u_1 \left ( \frac{\partial y}{\partial x} \right )^T \nabla_y u_2}{\left |  \text{det } \frac{\partial y}{\partial x} \right |} \text{ d} y \\ & =  \int_{\Omega'} \gamma (\omega(y)) \: \nabla_y u_1 \cdot \nabla_y u_2 \text{ d} y \\ & = \lang \Lambda_{\gamma \circ \omega}  \psi_1 , \psi_2 \rang .
\end{split} \]
where $\psi_i = \varphi_i \circ \omega$, $i = 1,2$.
\end{proof}

\begin{corollary} \label{cor:corollary_1} Let $\varphi \in H^{1/2}_{\Diamond} (\partial \Omega)$. For any conformal map $\omega : \Omega' \arr \Omega$ we have
\[ \| \varphi \circ \omega \|_{H^{1/2}_{\Diamond} (\partial \Omega')} = \| \varphi \|_{H^{1/2}_{\Diamond} (\partial \Omega)}. \]
\end{corollary}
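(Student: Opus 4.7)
The plan is to reduce the statement to a special case of Lemma 1.1 (the Dirichlet-integral conformal invariance) with $\gamma \equiv 1$, and exploit that the infimum defining the trace norm is realized along the same class of candidate extensions on both sides of the map $\omega$.

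First, I would observe that Lemma 1.1 applied with $\gamma \equiv 1$ and $\varphi_1 = \varphi_2 = \varphi$ (and correspondingly $\psi_1 = \psi_2 = \varphi \circ \omega$) yields that the Dirichlet energy of the harmonic extension is preserved under $\omega$. However, for the corollary I actually need the stronger statement that the pullback $u \mapsto u \circ \omega$ preserves the Dirichlet integral for \emph{every} $H^1_\Diamond$ extension, not merely the harmonic one. This is immediate from the change-of-variables computation already done in the proof of Lemma 1.1: the Cauchy--Riemann identity $(\partial y/\partial x)(\partial y/\partial x)^T = |\det(\partial y/\partial x)|\, I$ makes the conformal factor cancel in the integrand, so for any $u \in H^1_\Diamond(\Omega)$,
\[
\int_\Omega |\nabla_x u|^2 \,\mathrm{d}x = \int_{\Omega'} |\nabla_y (u \circ \omega)|^2 \,\mathrm{d}y.
\]

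Next, I would use this to prove the two inequalities. Given any $u \in H^1_\Diamond(\Omega)$ with $u|_{\partial\Omega} = \varphi$, the pullback $v := u \circ \omega$ belongs to $H^1_\Diamond(\Omega')$ (because $\omega$ extends to a $C^{1,\alpha}$ diffeomorphism of the closures) and satisfies $v|_{\partial\Omega'} = \varphi \circ \omega$, with $\|\nabla v\|_{L^2(\Omega')} = \|\nabla u\|_{L^2(\Omega)}$. Taking the infimum over $u$ yields
\[
\|\varphi \circ \omega\|_{H^{1/2}_\Diamond(\partial\Omega')} \le \|\varphi\|_{H^{1/2}_\Diamond(\partial\Omega)}.
\]
Since $\omega^{-1}:\Omega \to \Omega'$ is also a conformal map between $C^{1,\alpha}$ simply connected domains, the same argument applied in the reverse direction to $\varphi \circ \omega \in H^{1/2}_\Diamond(\partial\Omega')$ produces the opposite inequality, and equality follows.

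No serious obstacle is anticipated: the only technical point worth stressing is that the conformal-invariance identity must be invoked for arbitrary $H^1_\Diamond$ functions (not just solutions of the Dirichlet problem), which is justified by the very computation written inside the proof of Lemma 1.1. The role of the $C^{1,\alpha}$ boundary regularity is simply to guarantee that $\omega$ extends as a homeomorphism of the closures, so that boundary traces transform correctly under composition.
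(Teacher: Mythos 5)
Your proof is correct and follows essentially the same route as the paper, whose entire argument is ``use Lemma \ref{lem:lem_1} with $\gamma\equiv 1$'': both reduce the claim to the conformal invariance of the Dirichlet integral established by the change-of-variables computation in that lemma's proof. Your version merely spells out the details (you could shorten it by noting that $\|\varphi\|_{1/2}^2=\lang \Lambda_1\varphi,\varphi\rang$, since the harmonic extension attains the infimum, so the lemma's statement already suffices without passing to arbitrary extensions and a two-sided inequality).
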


\begin{proof}We use Lemma \ref{lem:lem_1} with $\gamma \equiv 1$.\end{proof}

\begin{definition} Let us denote by $\| \cdot \|_*$ the $\mathscr{L} (H^{1/2}_{\Diamond} , H^{-1/2})$-norm, that is
\[ \| L \|_* = \sup_{\| \varphi \|_{1/2} = 1} \| L \varphi \|_{-1/2}. \]
\end{definition}
\begin{remark} We recall that if  $L : H^{1/2}_{\Diamond}(\partial \Omega) \arr H^{-1/2}(\partial \Omega) $ is selfadjoint then we also have
\[ \| L \|_* = \sup_{\| \varphi \|_{1/2} = 1} | \lang L \varphi , \varphi \rang | .\]
Hence this formula may be applied when $L = \Lambda$ is a D-N map and also when $L = \Lambda_1 - \Lambda_2$ is the difference of two D-N maps.
\end{remark}

\begin{corollary} \label{cor:cor_conformal-map-norm} Let $\gamma_{1} , \gamma_2$ be two conductivities in $\Omega$ and let $\omega : {\Omega'} \arr {\Omega}$ be a conformal map. Then
\[ \| \Lambda_{\gamma_1 \circ \omega} - \Lambda_{\gamma_2 \circ \omega} \|_* = \| \Lambda_{\gamma_1} - \Lambda_{\gamma_2}\|_* .\]
\end{corollary}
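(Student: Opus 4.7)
The plan is to chain together the two previous results: Lemma \ref{lem:lem_1} (conformal invariance of the quadratic form $\langle \Lambda_\gamma \varphi_1, \varphi_2\rangle$) and Corollary \ref{cor:corollary_1} (conformal invariance of the $H^{1/2}_\Diamond$ norm), and combine them with the selfadjoint characterization of $\|\cdot\|_*$ recorded in the Remark. Since $\Lambda_{\gamma_1}-\Lambda_{\gamma_2}$ and $\Lambda_{\gamma_1\circ\omega}-\Lambda_{\gamma_2\circ\omega}$ are both selfadjoint (they are differences of D-N maps, and each D-N map is selfadjoint by the symmetric formula \eqref{eq:triangolo}), their operator norms can be computed as suprema of the absolute value of the associated quadratic form on the unit sphere of $H^{1/2}_\Diamond$.

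First I would fix $\psi \in H^{1/2}_\Diamond(\partial\Omega')$ with $\|\psi\|_{1/2}=1$ and set $\varphi = \psi\circ\omega^{-1} \in H^{1/2}_\Diamond(\partial\Omega)$. Applying Corollary \ref{cor:corollary_1} to the conformal map $\omega^{-1}:\Omega\to\Omega'$ (whose $C^{1,\alpha}$ regularity up to the boundary is ensured under our standing assumptions), I get $\|\varphi\|_{H^{1/2}_\Diamond(\partial\Omega)} = \|\psi\|_{H^{1/2}_\Diamond(\partial\Omega')} = 1$. Thus the correspondence $\psi \mapsto \varphi = \psi\circ\omega^{-1}$ is a bijection between the unit spheres of the two trace spaces.

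Next, applying Lemma \ref{lem:lem_1} with $\varphi_1 = \varphi_2 = \varphi$ to each of the conductivities $\gamma_1$ and $\gamma_2$ separately, and subtracting, yields
\[ \bigl\langle (\Lambda_{\gamma_1}-\Lambda_{\gamma_2})\varphi,\varphi\bigr\rangle \;=\; \bigl\langle (\Lambda_{\gamma_1\circ\omega}-\Lambda_{\gamma_2\circ\omega})\psi,\psi\bigr\rangle. \]
Taking absolute values and passing to the supremum over $\psi$ in the unit sphere of $H^{1/2}_\Diamond(\partial\Omega')$ (equivalently, $\varphi$ in the unit sphere of $H^{1/2}_\Diamond(\partial\Omega)$) gives, by the selfadjoint characterization,
\[ \|\Lambda_{\gamma_1\circ\omega}-\Lambda_{\gamma_2\circ\omega}\|_* \;=\; \|\Lambda_{\gamma_1}-\Lambda_{\gamma_2}\|_*, \]
which is the claim.

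I do not anticipate a genuine obstacle: the statement is essentially a bookkeeping consequence of Lemma \ref{lem:lem_1} and Corollary \ref{cor:corollary_1}. The only point requiring a moment of care is verifying selfadjointness of the differences so that the quadratic form suffices to compute $\|\cdot\|_*$, and ensuring the norm-preserving bijection between the two trace unit spheres is set up via $\omega^{-1}$ rather than $\omega$.
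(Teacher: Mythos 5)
Your proposal is correct and follows exactly the route the paper intends: the paper's proof is simply ``immediate consequence of Lemma \ref{lem:lem_1} and Corollary \ref{cor:corollary_1}'', and you have spelled out precisely that argument, including the selfadjointness point needed to reduce the operator norm to the quadratic form. No issues.
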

\begin{proof} Immediate consequence of Lemma \ref{lem:lem_1} and its Corollary \ref{cor:corollary_1}.\end{proof}

\begin{definition} We introduce the class
\[ \Gamma_\Omega (\rho, q) = \left \{ \gamma \in L^\infty (\Omega) \; : \; K^{-1} \le \gamma \le K , \; \gamma = 1 + \chi_{B_\rho(q)} ( \gamma - 1 ) \right \} \]
as the family of conductivities which are perturbations of the homogeneous conductivity $\gamma \equiv 1$, localized in $B_\rho (q)$. We shall call the point $q$ the \emph{center of the perturbation}.
\end{definition}

\begin{definition} Let $\varepsilon_0 > 0$ be the error level admitted on the known measurement of the map $\Lambda_\gamma$. We shall say that two conductivities $\gamma_1, \gamma_2$ are $\varepsilon_0$-\emph{indistinguishable} if \[ \| \Lambda_{\gamma_1} - \Lambda_{\gamma_2} \|_* \le \varepsilon_0 . \]
\end{definition}

\begin{definition} Given the disk $B_\rho (q)$, we denote two specific elements of $\Gamma_\Omega (\rho , q)$ as follows:
\[ \gamma_K = ( 1 + \chi_{B_\rho(q)} ( K - 1) ) , \]
\[ \gamma_{K^{-1}} = ( 1 + \chi_{B_\rho(q)} ( K^{-1} - 1) ). \]
\end{definition}
Note that for all $\gamma \in \Gamma_\Omega (\rho , q)$
\[ \gamma_K \le \gamma \le \gamma_{K^{-1}}. \]
For this reason it is sensible to call $\gamma_K , \gamma_{K^{-1}}$ the \emph{extreme conductivities} in $\Gamma_{\Omega} (\rho , q)$.

\begin{definition}  We define the \emph{resolution limit} (at level $\varepsilon_0$) relative to the center $q \in \Omega$ the number \[ \ell_q = \sup \left \{ \rho > 0  : \text{ for all } \gamma_1 , \gamma_2  \in \Gamma_{\Omega} ( \rho , q),\; \gamma_1 , \gamma_2 \text{ are indistinguishable} \right \}. \]
\end{definition}

For the sake of brevity, when $\rho, q$ are kept fixed, we denote by $\Lambda_i$ the map $\Lambda_{\gamma_i}$ and by  $\Lambda_K$, $\Lambda_{K^{-1}}$ the maps $\Lambda_{\gamma_{K}}, \Lambda_{\gamma_{K^{-1}}}$  rispectively.

\begin{lemma}  \label{lem:lem_monotonicity} Let $\gamma_1 , \gamma_2 \in \Gamma_\Omega (\rho,q)$. Then the following estimate holds:
\[ \| \Lambda_{1} - \Lambda_{2} \|_* \le  \| \Lambda_{K} - \Lambda_{K^{-1}} \|_* . \]
\end{lemma}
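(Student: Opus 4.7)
The plan is to exploit monotonicity of the quadratic form $\varphi \mapsto \langle \Lambda_\gamma \varphi, \varphi\rangle$ with respect to $\gamma$, which is a standard consequence of its variational characterization
\[ \langle \Lambda_\gamma \varphi, \varphi \rangle \;=\; \min_{\substack{v \in H^1_{\Diamond}(\Omega)\\ v|_{\partial\Omega} = \varphi}} \int_\Omega \gamma |\nabla v|^2, \]
so that $\gamma \le \tilde\gamma$ pointwise implies $\langle \Lambda_\gamma \varphi, \varphi\rangle \le \langle \Lambda_{\tilde\gamma} \varphi, \varphi\rangle$ for every $\varphi$.

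The first step is to observe that, for every $\gamma \in \Gamma_\Omega(\rho,q)$, one has the pointwise sandwich $\gamma_{K^{-1}} \le \gamma \le \gamma_K$ (since inside $B_\rho(q)$ the values range in $[K^{-1},K]$ and outside all three conductivities equal $1$). Applying the monotonicity recalled above to both $\gamma_1$ and $\gamma_2$ yields, as quadratic forms on $H^{1/2}_\Diamond(\partial\Omega)$,
\[ \Lambda_{K^{-1}} \le \Lambda_i \le \Lambda_{K}, \qquad i = 1,2. \]
Subtracting these inequalities in both orders gives the two-sided bound
\[ -(\Lambda_K - \Lambda_{K^{-1}}) \;\le\; \Lambda_1 - \Lambda_2 \;\le\; \Lambda_K - \Lambda_{K^{-1}}, \]
again interpreted as an inequality between selfadjoint quadratic forms.

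The second step is to convert these form inequalities into an operator-norm bound via the remark preceding the lemma. Since $\Lambda_1 - \Lambda_2$ is selfadjoint,
\[ \|\Lambda_1 - \Lambda_2\|_* \;=\; \sup_{\|\varphi\|_{1/2}=1} \bigl|\langle (\Lambda_1 - \Lambda_2)\varphi,\varphi\rangle\bigr|. \]
By the two-sided form inequality just established, for every $\varphi$ with $\|\varphi\|_{1/2}=1$,
\[ \bigl|\langle (\Lambda_1 - \Lambda_2)\varphi,\varphi\rangle\bigr| \;\le\; \langle (\Lambda_K - \Lambda_{K^{-1}})\varphi,\varphi\rangle, \]
and since $\Lambda_K - \Lambda_{K^{-1}}$ is itself selfadjoint and positive, the right-hand side is bounded by $\|\Lambda_K - \Lambda_{K^{-1}}\|_*$, again by the selfadjoint-norm formula. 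Taking the supremum over such $\varphi$ yields the claim.

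There is no essential obstacle: the argument is a direct application of the variational monotonicity together with the selfadjoint characterization of the operator norm. The only minor point to be careful about is the direction of the inequality $\gamma_{K^{-1}} \le \gamma_K$ (rather than the opposite printed in the paragraph preceding the lemma), which is resolved by noting that $K^{-1} \le 1 \le K$ and that both extreme conductivities coincide with $1$ outside $B_\rho(q)$.
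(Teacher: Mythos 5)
Your proof is correct and follows essentially the same route as the paper: the variational (Dirichlet-principle) characterization of $\lang \Lambda_\gamma\varphi,\varphi\rang$ gives monotonicity in $\gamma$, the pointwise sandwich $\gamma_{K^{-1}}\le\gamma_i\le\gamma_K$ yields the two-sided form inequality, and the selfadjoint-norm formula converts it into the operator-norm bound. Your side remark is also accurate: the displayed inequality $\gamma_K\le\gamma\le\gamma_{K^{-1}}$ preceding the lemma is a typo for the reverse pointwise order, and the paper's own proof indeed uses the correct direction.
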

\begin{proof} Given $\gamma \in \Gamma_\Omega (\rho, q)$, by \eqref{eq:problema-variazionale}  for all $\varphi \in H^{1/2}_{\Diamond}( \partial \Omega)$ we have
\[ \lang \Lambda_{\gamma} \varphi , \varphi \rang = \inf_{\substack{u \in H^{1}_{\Diamond} (\Omega)\\u_{|\partial \Omega }= \varphi}} \int_\Omega \gamma | \nabla u |^2 . \]
Hence
\[ \lang \Lambda_{K^{-1}} \varphi , \varphi \rang \; \le \; \lang \Lambda_{i} \varphi , \varphi \rang \; \le \;\lang \Lambda_{K} \varphi , \varphi \rang \;,\quad i = 1,2 , \]
and consequently
\[ | \lang ( \Lambda_{1} - \Lambda_{2} ) \varphi , \varphi \rang | \le \; \lang (\Lambda_{K} - \Lambda_{K^{-1}}) \varphi , \varphi \rang .\]
\end{proof}

\begin{corollary} \label{cor:cor_1} 
\[ \ell_q = \sup \left \{ \rho > 0  : \gamma_K , \gamma_{K^{-1}}  \in \Gamma_{\Omega} ( \rho , q) \text{ are indistinguishable} \right \}. \]
\end{corollary}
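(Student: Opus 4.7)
The plan is to show that the two suprema agree by a straightforward two-sided comparison, exploiting the monotonicity lemma just proved.

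Let me denote by $S_1$ the supremum defining $\ell_q$ and by $S_2$ the supremum in the statement of the corollary. I would first observe that for any $\rho > 0$, both conductivities $\gamma_K$ and $\gamma_{K^{-1}}$ lie in $\Gamma_\Omega(\rho, q)$ (this is immediate from the definitions). Therefore, if $\rho$ is in the set over which $S_1$ is taken, i.e., every pair of conductivities in $\Gamma_\Omega(\rho, q)$ is $\varepsilon_0$-indistinguishable, then in particular $\gamma_K$ and $\gamma_{K^{-1}}$ are $\varepsilon_0$-indistinguishable. This gives the inclusion of the first set into the second, whence $S_1 \le S_2$.

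For the reverse inequality, I would use Lemma \ref{lem:lem_monotonicity} directly. Suppose $\rho$ lies in the set defining $S_2$, i.e., $\| \Lambda_K - \Lambda_{K^{-1}} \|_* \le \varepsilon_0$. Then for any pair $\gamma_1, \gamma_2 \in \Gamma_\Omega(\rho, q)$ the lemma yields
\[ \| \Lambda_{\gamma_1} - \Lambda_{\gamma_2} \|_* \le \| \Lambda_K - \Lambda_{K^{-1}} \|_* \le \varepsilon_0, \]
so $\gamma_1, \gamma_2$ are $\varepsilon_0$-indistinguishable. Hence every such $\rho$ belongs to the set defining $S_1$, giving $S_2 \le S_1$.

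There is no real obstacle here: the corollary is essentially a reformulation of Lemma \ref{lem:lem_monotonicity}, whose content is that the pair of extreme conductivities realizes the worst-case discrepancy within $\Gamma_\Omega(\rho, q)$. The only point that deserves a brief mention, if desired, is the trivial fact that $\gamma_K, \gamma_{K^{-1}}$ are genuine members of $\Gamma_\Omega(\rho, q)$, which ensures the inclusion used in the first step.
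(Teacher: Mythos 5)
Your argument is correct and is exactly what the paper means by ``immediate consequence of Lemma \ref{lem:lem_monotonicity}'': the trivial inclusion gives one inequality and the monotonicity lemma gives the other. You have simply written out the two-sided comparison that the paper leaves implicit.
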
 
\begin{proof} Immediate consequence of Lemma \ref{lem:lem_monotonicity}. \end{proof}

\section{The resolution limit, formulas and asymptotics.}

\begin{lemma} \label{lemma:mappaK} For all $\varphi \in H^{1/2}(\partial B_1 (0))$, $\varphi(\theta) = \sum_{n \in \ZZ} \varphi_n e^{i n \theta}$, the following formulas hold:
\[ \begin{split}\Lambda_{K} \varphi &=  \sum_{n \in \ZZ} |n|  \dfrac{(K+1) + r^{2|n|} (K - 1)}{(K+1) - r^{2|n|} (K - 1)}  \varphi_n e^{i n \theta} ,\\
 \Lambda_{K^{-1}} \varphi &=  \sum_{n \in \ZZ} |n|  \dfrac{(K+1) - r^{2|n|} (K - 1)}{(K+1) + r^{2|n|} (K - 1)}  \varphi_n e^{i n \theta} . \end{split} \]
\end{lemma}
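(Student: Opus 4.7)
The conductivity $\gamma_K = 1 + (K-1)\chi_{B_r(0)}$ is radially symmetric and piecewise constant in $\rho = |z|$, so the plan is a textbook separation of variables in polar coordinates on the two regions $\{\rho < r\}$ and $\{r < \rho < 1\}$. By linearity and continuity of $\Lambda_K$ on $H^{1/2}_\Diamond$, it is enough to verify the formula on a single Fourier mode $\varphi(\theta) = e^{in\theta}$; the case $n=0$ is trivial since constants give zero D-N image, matching the factor $|n|$.

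For $n \neq 0$ I would look for a solution of the form $u_n(\rho,\theta) = R_n(\rho) e^{in\theta}$. Since $\gamma$ is constant on each of the two subdomains, $u_n$ is harmonic there, so $R_n$ is a combination of $\rho^{|n|}$ and $\rho^{-|n|}$. Regularity at the origin forces the inside solution to be $A\rho^{|n|}$, while in the annulus one has $B\rho^{|n|} + C\rho^{-|n|}$. The three unknowns $A,B,C$ are determined by the three scalar conditions:
\begin{enumerate}
\item[(i)] $R_n$ continuous at $\rho = r$;
\item[(ii)] $\gamma R_n'$ continuous at $\rho = r$ (transmission condition coming from the weak formulation of $\operatorname{div}(\gamma\nabla u)=0$);
\item[(iii)] $R_n(1) = 1$ (Dirichlet datum).
\end{enumerate}
Writing the transmission as $KA\,r^{|n|} = Br^{|n|} - Cr^{-|n|}$ and combining it with the continuity $Ar^{|n|} = Br^{|n|} + Cr^{-|n|}$, addition and subtraction yield $B = \frac{K+1}{2}A$ and $C = -\frac{K-1}{2}A r^{2|n|}$, and the boundary condition then gives the value of $A$.

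Finally, since $\gamma \equiv 1$ on $\partial B_1(0)$, the co-normal derivative is just $\partial_\rho u_n(1,\theta)$, which equals $|n|(B - C) e^{in\theta}$. A short simplification produces
\[ |n| \, \frac{(K+1) + (K-1) r^{2|n|}}{(K+1) - (K-1) r^{2|n|}}\, e^{in\theta}, \]
which is the claimed eigenvalue of $\Lambda_K$ on the mode $e^{in\theta}$. The expression for $\Lambda_{K^{-1}}$ follows by substituting $K \mapsto K^{-1}$ and clearing denominators.

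I do not anticipate any genuine obstacle: the argument is a standard separation-of-variables computation, and the only point requiring care is the derivation of the transmission condition from the weak form, together with the sum over $n \in \ZZ$ of terms with exponential decay in $|n|$ (since $r<1$), which guarantees that the formal Fourier series really defines bounded operators on $H^{1/2}_\Diamond \to H^{-1/2}$.
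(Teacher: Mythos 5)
Your proposal is correct and follows exactly the route the paper intends: the paper's entire proof of this lemma is the one-line remark ``by separation of variables in polar coordinates,'' and your computation (harmonic modes $\rho^{\pm|n|}$ on each region, continuity of $u$ and of $\gamma\partial_\rho u$ at $\rho=r$, Dirichlet datum at $\rho=1$) fills in precisely that calculation, with the coefficients $B=\tfrac{K+1}{2}A$, $C=-\tfrac{K-1}{2}Ar^{2|n|}$ reproducing the stated eigenvalues, and the $K\mapsto K^{-1}$ substitution correctly yielding the second formula.
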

\begin{proof} By separation of variables in polar coordinates.\end{proof}

\begin{lemma} \label{lemma:fondamentale}
\begin{equation} \label{eq:eq_rappr} \| \Lambda_{K} - \Lambda_{K^{-1}} \|_* = \frac{4 (K^2 - 1) r^{2}}{(K+1)^2 - r^{4} (K-1)^2}  =  \frac{4 k r^{2}}{1 - k^2 r^{4}}, \end{equation}
where 
\begin{equation} \label{eq:k} k = \frac{K-1}{K+1} .  \end{equation}
\end{lemma}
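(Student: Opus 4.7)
The plan is to exploit the explicit Fourier-diagonal form of $\Lambda_K$ and $\Lambda_{K^{-1}}$ provided by Lemma \ref{lemma:mappaK} and to reduce the operator norm to a scalar optimisation over $|n|\ge 1$.

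First I would subtract the two expressions in Lemma \ref{lemma:mappaK} and put the $n$-th coefficient of $\Lambda_K-\Lambda_{K^{-1}}$ over a common denominator. After the algebra, the eigenvalue on $e^{in\theta}$ simplifies to
\[
\mu_n \;=\; |n|\,\frac{4(K^2-1)r^{2|n|}}{(K+1)^2-(K-1)^2 r^{4|n|}} \;=\; |n|\,\frac{4kr^{2|n|}}{1-k^2 r^{4|n|}}, \qquad k=\tfrac{K-1}{K+1}.
\]
This makes $\Lambda_K-\Lambda_{K^{-1}}$ selfadjoint and positive, so by the Remark after the definition of $\|\cdot\|_*$,
\[
\|\Lambda_K-\Lambda_{K^{-1}}\|_* \;=\; \sup_{\|\varphi\|_{1/2}=1}\langle(\Lambda_K-\Lambda_{K^{-1}})\varphi,\varphi\rangle.
\]

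Next I would compute the $H^{1/2}_\Diamond(\partial B_1)$-norm on Fourier modes. For $\varphi=\sum_{n\ne 0}\varphi_n e^{in\theta}$ the harmonic extension $u=\sum\varphi_n r^{|n|}e^{in\theta}$ realises the infimum in the definition of $\|\varphi\|_{1/2}$, and a direct computation in polar coordinates gives
\[
\|\varphi\|_{1/2}^2 \;=\; \int_{B_1}|\nabla u|^2 \;=\; 2\pi\sum_{n\ne 0}|n|\,|\varphi_n|^2.
\]
On the other hand the quadratic form of $\Lambda_K-\Lambda_{K^{-1}}$ expands, by Parseval, as
\[
\langle(\Lambda_K-\Lambda_{K^{-1}})\varphi,\varphi\rangle \;=\; 2\pi\sum_{n\ne 0}\mu_n|\varphi_n|^2
\;=\; 2\pi\sum_{n\ne 0}|n|\,\frac{4kr^{2|n|}}{1-k^2r^{4|n|}}\,|\varphi_n|^2 .
\]

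Writing $a_n=2\pi|n||\varphi_n|^2$, the ratio to be maximised becomes a convex combination of the numbers $f(|n|):=4kr^{2|n|}/(1-k^2 r^{4|n|})$, whose supremum is attained at whichever $|n|\ge 1$ maximises $f$. Setting $t=r^{2|n|}\in(0,1)$ and viewing $f$ as $g(t)=4kt/(1-k^2t^2)$, differentiation gives $g'(t)=4k(1+k^2t^2)/(1-k^2t^2)^2>0$ for $0<k,t<1$; hence $g$ is increasing in $t$, and since $0<r<1$ the value $f(|n|)$ is decreasing in $|n|$. The maximum is therefore achieved at $|n|=1$, yielding exactly $4kr^2/(1-k^2r^4)$, which matches the two equivalent forms in \eqref{eq:eq_rappr}.

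The main obstacle I expect is the bookkeeping in step two, namely verifying that the intrinsic $H^{1/2}_\Diamond$-norm on $\partial B_1$ coincides with the weighted $\ell^2$ norm $\bigl(2\pi\sum|n||\varphi_n|^2\bigr)^{1/2}$; once this is in hand the rest is the elementary monotonicity argument for $g(t)$.
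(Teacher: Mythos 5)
Your proposal is correct and follows essentially the same route as the paper: diagonalise $\Lambda_K-\Lambda_{K^{-1}}$ on the Fourier modes via Lemma \ref{lemma:mappaK}, identify $\|\varphi\|_{1/2}^2$ with the weighted sum $\sum_n |n||\varphi_n|^2$, and conclude by the monotonicity in $|n|$ of the coefficient $4kr^{2|n|}/(1-k^2r^{4|n|})$, which places the supremum at $|n|=1$. The paper states the monotonicity without proof, so your explicit check via $g(t)=4kt/(1-k^2t^2)$ being increasing on $(0,1)$ is a welcome, if minor, addition.
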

\begin{proof} We have
\[ \begin{split} \| \Lambda_{K} - \Lambda_{K^{-1}} \|_* &= \sup_{\varphi \ne 0} \dfrac{ \lang (\Lambda_K - \Lambda_{K^{-1}} ) \varphi , \varphi \rang}{\| \varphi \|^2_{1/2}} \\ &=  \sup_{\varphi \ne 0}  \dfrac{\sum_{n \in \ZZ}  |n| \dfrac{4 (K^2 - 1) r^{2|n|}}{(K+1)^2 - r^{4|n|} (K-1)^2} |\varphi_n|^2}{\sum_{n \in \ZZ} |n| |\varphi_n|^2} . \end{split} \]
Since the expression \[ \dfrac{4 (K^2 - 1) r^{2|n|}}{(K+1)^2 - r^{4|n|} (K-1)^2} \] is decreasing with respect to $n \in \NN \setminus \{ 0 \}$, we obtain that
\[  \| \Lambda_{K} - \Lambda_{K^{-1}} \|_*  = \frac{4 (K^2 - 1) r^{2}}{(K+1)^2 - r^{4} (K-1)^2} .\]
\end{proof}

\begin{theorem}[The resolution at the center of a disk] \label{prop:prop_risol-bound} Let $\Omega = B_1(0)$. The resolution limit at the center of the disk $B_1(0)$ is
\begin{equation}\label{eq:eq_risol-bound} \ell_0 =   \sqrt{\frac{\sqrt{4 + \varepsilon_0^2} - 2}{\varepsilon_0  k}},\end{equation}
where $k$ is the constant introduced in \eqref{eq:k}.
\end{theorem}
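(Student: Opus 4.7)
The plan is to read off the resolution limit at $q = 0$ directly from Corollary \ref{cor:cor_1} together with the explicit formula for $\|\Lambda_K - \Lambda_{K^{-1}}\|_*$ provided by Lemma \ref{lemma:fondamentale}. Since the perturbation is concentrated in the disk $B_\rho(0)$, centered at the origin, the geometry coincides exactly with the setting of Lemmas \ref{lemma:mappaK} and \ref{lemma:fondamentale} with $r$ replaced by $\rho$.

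First, I would invoke Corollary \ref{cor:cor_1} to reduce the determination of $\ell_0$ to the problem
\[ \ell_0 = \sup\bigl\{ \rho > 0 \;:\; \|\Lambda_K - \Lambda_{K^{-1}}\|_* \le \varepsilon_0 \bigr\}, \]
and then substitute the explicit expression from Lemma \ref{lemma:fondamentale} to rewrite the defining inequality as
\[ \frac{4 k \rho^2}{1 - k^2 \rho^4} \le \varepsilon_0. \]

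Next, I would observe that, for $\rho \in (0, k^{-1/2})$, the function $\rho \mapsto \frac{4k\rho^2}{1 - k^2 \rho^4}$ is continuous and strictly increasing from $0$ to $+\infty$. Consequently $\ell_0$ is characterized as the unique positive root of the equation
\[ \frac{4 k \rho^2}{1 - k^2 \rho^4} = \varepsilon_0. \]
Setting $t = \rho^2$, this becomes the quadratic $\varepsilon_0 k^2 t^2 + 4k t - \varepsilon_0 = 0$, whose positive root is
\[ t = \frac{-2 + \sqrt{4 + \varepsilon_0^2}}{\varepsilon_0 k}. \]
Taking the square root yields \eqref{eq:eq_risol-bound}.

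There is no real obstacle in the argument: the main point is that all the analytical work has already been carried out in Lemmas \ref{lemma:mappaK} and \ref{lemma:fondamentale} (the separation of variables computation and the monotonicity of the Fourier multiplier in $n$), so the theorem reduces to solving a quadratic in $t = \rho^2$. The only minor care needed is to check that the positive root indeed lies in the admissible range $(0, k^{-1/2})$ where the denominator $1 - k^2 \rho^4$ is positive, which follows because $\sqrt{4+\varepsilon_0^2}-2 < \varepsilon_0$, so $t < 1/k$.
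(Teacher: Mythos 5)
Your proposal is correct and follows essentially the same route as the paper: reduce to the extreme conductivities via Corollary \ref{cor:cor_1}, apply the explicit norm formula of Lemma \ref{lemma:fondamentale}, and solve the resulting quadratic in $\rho^2$. The only addition is your explicit check that the root lies in $(0,k^{-1/2})$, which the paper leaves implicit (and which it complements afterwards with the remark that the formula is meaningful only when $\ell_0<1$).
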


\begin{proof} The extreme conductivities  $\gamma_K , \gamma_{K^{-1}} \in \Gamma_{B_1(0)} (r,0)$ are $\varepsilon_0$-indistinguishable if and only if 
\begin{equation} \label{eq:diseq1} \frac{4 (K^2 - 1) r^{2}}{(K+1)^2 - r^{4} (K-1)^2} \le \varepsilon_0 , \end{equation}
that is
\begin{equation} \label{eq:eq-bound} r \le \sqrt{ \frac{- 2 + \sqrt{4 + \varepsilon_0^2}}{\varepsilon_0  k}} . \end{equation}
Hence, by Corollary \ref{cor:cor_1}, the right-hand side in \eqref{eq:eq-bound} defines $\ell_0$.\end{proof}

\begin{remark} $\ell_0$ is meaningful only if $\ell_0 < 1$ and this corresponds to require \[ \varepsilon_0 < \varepsilon_{\max} = \frac{4 k}{1 - k^2}.\]
Evidently $\ell_0$ is an increasing function of $\varepsilon_0$ and Fig. \ref{fig:limdirisol0K3} examplifies its graph for a fixed value of $K$. 
\end{remark}

\begin{figure}[H]
\centering
\includegraphics[scale=0.40]{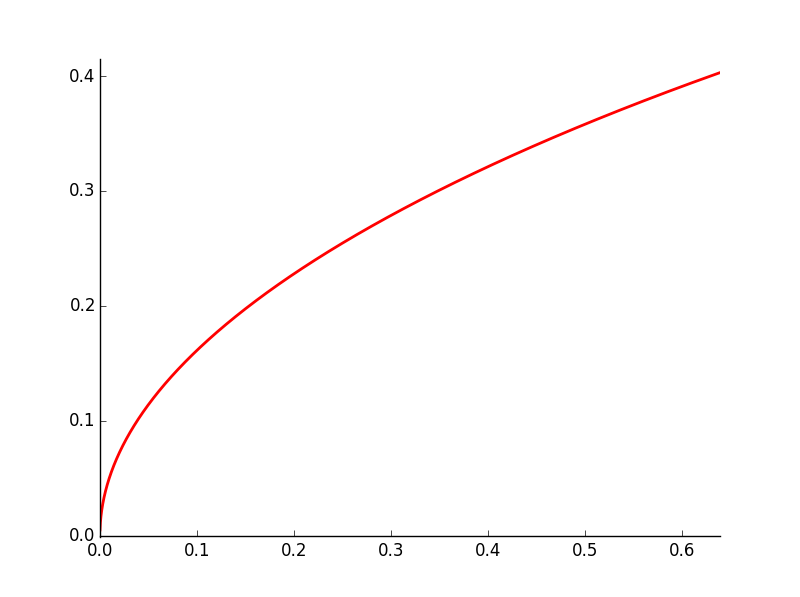} 
\caption{$\ell_0 = \ell_0 (\varepsilon_0 ,  K)$ con $K = 50$.\label{fig:limdirisol0K3}}
\end{figure}

Next we observe the following asymptotic behaviours  as function of $\varepsilon_0$ and $K$.

\begin{remark}
\begin{equation} \label{eq:comportamento-asintotico} \ell_0 (\varepsilon_0 , K) = \frac{1}{2 \sqrt{k}} \left ( \sqrt{\varepsilon_0} + O (\varepsilon_0^{5/2}) \right ) \quad \text{ as } \varepsilon_0 \to 0^+ \end{equation}
\end{remark}
Moreover, if we fix $\varepsilon_0 > 0$, we examine the behaviour with respect to $K$.
\[ \ell_0 (\varepsilon_0 , K) = C(\varepsilon_0) \sqrt{\frac{K+1}{K-1}}  ,\]
where 
\[ C(\varepsilon_0) = \sqrt{\frac{\sqrt{4 + \varepsilon_0^2} - 2}{\varepsilon_0}}.\]

\begin{remark} The function $\ell_0 = \ell_0 (\varepsilon_0 , K)$ has the following properties:
\begin{enumerate}
\item \( \displaystyle \lim_{K \to + \infty} \ell_0(\varepsilon_0 , K) = C(\varepsilon_0)  \) , \( \displaystyle \lim_{K \to 1^+} \ell_0(\varepsilon_0 , K) = + \infty \);
\item \( \ell_0 = \ell_0 (\varepsilon_0 , K) \) is strictly decreasing with respect to $K$;
\item  $\ell_0(\varepsilon_0 , K) < 1$ if $K > \dfrac{1 + C(\varepsilon_0) ^2}{1 - C(\varepsilon_0) ^2} = 2^{-1} \left ( \varepsilon_0 + \sqrt{ 4  + \varepsilon_0^2} \right )$.
\end{enumerate}
Note in particular that
\[ \inf_{K \ge 1} \ell_0 (\varepsilon_0 ,K) = C(\varepsilon_0) > 0.\]
Hence $C(\varepsilon_0)$ is a lower bound on the resolution limit which is independent of the ellipticity. See for example Fig. \ref{fig:lim_risol_0_Kinfty}  for $\varepsilon_0$ fixed at level $10^{-1}$.

\begin{figure}[H]
\centering
\includegraphics[scale=0.40]{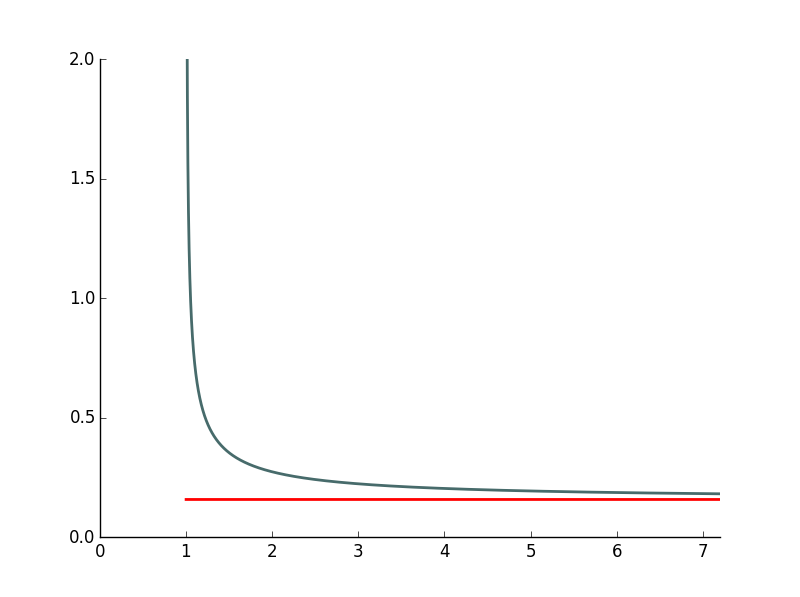} 
\caption{$\ell_0 = \ell_0 (\varepsilon_0 , K)$ with $\varepsilon_0 = 10^{-1}$ fixed.\label{fig:lim_risol_0_Kinfty}}
\end{figure}

Note also that if $K < 2^{-1} \left ( \varepsilon_0 + \sqrt{ 4  + \varepsilon_0^2} \right )$  then all conductivities are indistinguishable.
\end{remark}

\begin{proposition} \label{prop:disk_automorphism} Given $r \in (0,1)$ e $q \in [0,1)$, then there exists a (conformal) automorphism $f : \overline{B_1(0)} \longrightarrow \overline{B_1(0)}$ such that $f(B_\rho(q)) = B_r (0)$, where
\begin{equation} \label{eq:eq_relation_rho}  \rho = \frac{1 + r^2 - \sqrt{1 + (4 q^2 - 2) r^2 + r^4}}{2 r} . \end{equation}
\end{proposition}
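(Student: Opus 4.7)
The plan is to realize $f$ as a real Möbius automorphism of the unit disk, writing
\[ f(z) = f_a(z) = \frac{z - a}{1 - a z}, \qquad a \in (-1,1), \]
and to determine $a$ as a function of $q$ and $r$ so that $f_a(B_\rho(q)) = B_r(0)$ for the claimed value of $\rho$.

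The key geometric observation is that, since $a$ is real, $f_a$ preserves the real axis; as a Möbius transformation it also sends circles to circles. Consequently it maps any disk centered on the real axis to another disk centered on the real axis, which is completely determined by the images of the two endpoints of its horizontal diameter. Hence the identity $f_a(B_\rho(q)) = B_r(0)$ is equivalent to the two scalar conditions
\[ f_a(q - \rho) = -r, \qquad f_a(q + \rho) = r. \]
I would then add and subtract these equations, and after routine manipulation rewrite them in the clean form
\[ q = a(1 - \rho r), \qquad a^2 = \frac{r - \rho}{r(1 - \rho r)}. \]

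Eliminating $a$ between the two relations, that is substituting $a = q/(1-\rho r)$ into the second, yields the quadratic
\[ r \rho^2 - (1 + r^2)\rho + r(1 - q^2) = 0, \]
whose discriminant is $(1+r^2)^2 - 4 r^2(1-q^2) = 1 + (4q^2 - 2)r^2 + r^4$. Taking the smaller root of this quadratic reproduces exactly the formula \eqref{eq:eq_relation_rho}. A quick sanity check confirms $\rho = r$ when $q = 0$ (where the discriminant collapses to $(1-r^2)^2$) and $\rho \to 0$ as $q \to 1^-$, both geometrically expected.

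The only subtle point, which I expect to be the main bookkeeping obstacle, is justifying the choice of the minus sign in the quadratic formula: the two roots correspond to the two disks on the real line sharing the same pair of boundary intersection points, and one must pick the one that both lies in $(0,1)$ and yields $B_\rho(q) \subset B_1(0)$. Once this is done, the corresponding value $a = q/(1-\rho r)$ is readily checked to lie in $(-1,1)$, so $f_a$ is indeed a conformal automorphism of $\overline{B_1(0)}$, which completes the construction.
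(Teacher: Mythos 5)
Your proposal is correct and follows essentially the same route as the paper: both use the real-parameter M\"obius automorphisms $z\mapsto (z-a)/(1-az)$ and reduce the claim to the algebraic relation between $(q,\rho)$ and $(a,r)$, arriving at the same quadratic. The only (cosmetic) difference is that you extract the relations from the images of the two real endpoints $q\pm\rho$ of the diameter, while the paper identifies the image circle $\lvert f_p(z)\rvert=r$ directly and then inverts the resulting formulas for $q$ and $\rho$ in terms of $p$; your sign worry is easily dispatched since the larger root satisfies $\rho_+ r\ge 1$, hence $\rho_+>1$, and is thus excluded.
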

\begin{proof} Up to rotations, the generic automorphism of $B_1(0)$ is given by
\[ f_p (z) = \frac{ z - p }{1 - p z} , \]
for any $p \in [0,1)$. We have
\[ | f_p(z) | = r  \quad \text{ if and only if } \quad \left | \frac{z - p}{1 - p z} \right |^2 =  r^2 .\]
That is: $f_p$ maps $B_r(0)$ onto $B_\rho (q)$ with $q, \rho$ given by
\[ \left \{ \begin{matrix} \displaystyle q = \frac{ p ( 1  -   r^2)}{1 - r^2 p^2 } , & \\ & \\ \displaystyle \rho = \frac{r ( 1 - p^2)}{1 - r^2 p^2}. \end{matrix} \right . \]
Viceversa, given $q$ and $r$, we can solve for $p$ and obtain
\[ \left \{ \begin{matrix} \displaystyle p = \sqrt{ \frac{1}{r^2} + \left ( \frac{1 - r^2}{2 r^2 q} \right )^2 } - \frac{1 - r^2}{2 r^2 q} ,& \\ & \\ \displaystyle \rho = \frac{1 + r^2 - \sqrt{1 + (4 q^2 - 2) r^2 + r^4}}{2 r} . \end{matrix} \right . \]
and \eqref{eq:eq_relation_rho} follows.
\end{proof}

\begin{theorem}[Depth dependent resolution in a disk]
Let $\Omega = B_1(0)$. The resolution limit at level $\varepsilon_0 > 0$, relative to the center $q$, is given by
\begin{equation} \label{eq:resolution_limit_disk_q} \ell_q =  \frac{1 + \ell_0^2 - \sqrt{ 1 + (4  q^2 - 2 ) \ell_0^2 + \ell_0^4}}{2 \ell_0}, \end{equation}
where $\ell_0$ is the number introduced in \eqref{eq:eq_risol-bound}.
\end{theorem}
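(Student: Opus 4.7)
The plan is to transport the problem from a generic point $q$ back to the center by a Möbius automorphism of $B_1(0)$, thereby reducing to Theorem \ref{prop:prop_risol-bound}. First, by Corollary \ref{cor:cor_1}, $\ell_q$ is the supremum of those $\rho>0$ for which the two extreme conductivities $\gamma_K,\gamma_{K^{-1}}\in\Gamma_{B_1(0)}(\rho,q)$ are $\varepsilon_0$-indistinguishable. For such a $\rho$, Proposition \ref{prop:disk_automorphism} supplies a conformal automorphism $f:B_1(0)\to B_1(0)$ with $f(B_\rho(q))=B_r(0)$, where $r$ and $\rho$ are linked by \eqref{eq:eq_relation_rho}.

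The next step is to observe that, setting $\omega = f^{-1}$, the identity $\chi_{B_\rho(q)}\circ\omega=\chi_{B_r(0)}$ immediately gives $\gamma_K\circ\omega = 1 + \chi_{B_r(0)}(K-1)$ and similarly for $\gamma_{K^{-1}}$. Hence the pulled-back pair is exactly the extreme pair in $\Gamma_{B_1(0)}(r,0)$. Invoking Corollary \ref{cor:cor_conformal-map-norm} then yields
\[ \|\Lambda_{\gamma_K}-\Lambda_{\gamma_{K^{-1}}}\|_{*} \;=\; \|\Lambda_{\gamma_K\circ\omega}-\Lambda_{\gamma_{K^{-1}}\circ\omega}\|_{*} , \]
so the indistinguishability at radius $\rho$ about $q$ is equivalent to the indistinguishability at radius $r$ about the origin, which by Theorem \ref{prop:prop_risol-bound} is equivalent to $r\le\ell_0$.

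The conclusion will then follow by passing from the bound $r\le\ell_0$ to a bound on $\rho$ via \eqref{eq:eq_relation_rho}. For this one needs the strict monotonicity of $r\mapsto\rho$ with $q$ fixed: this is geometrically transparent (the nested family of centered disks $B_r(0)$ is sent by the corresponding Möbius maps to a nested family of disks around $q$) and can also be extracted analytically from the formula. Taking $r=\ell_0$ in \eqref{eq:eq_relation_rho} then yields the claimed expression \eqref{eq:resolution_limit_disk_q}. As a sanity check, when $q=0$ the radical in \eqref{eq:eq_relation_rho} simplifies to $|1-\ell_0^2|$ and the formula collapses to $\ell_q=\ell_0$, consistent with Theorem \ref{prop:prop_risol-bound}.

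The one point that seems to require a little genuine care is the monotonicity of $r\mapsto\rho$ in \eqref{eq:eq_relation_rho}, since for fixed $q$ the Möbius parameter $p$ also varies with $r$, so one is not merely comparing images under a single conformal map. Once this (and the correct choice of the minus sign in front of the square root in \eqref{eq:eq_relation_rho}) is in place, the conformal transfer and the identification with $\ell_0$ go through automatically.
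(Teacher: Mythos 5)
Your proof is correct and follows exactly the paper's route: the paper's own proof is the one-line statement that the result is a straightforward consequence of Corollary \ref{cor:cor_conformal-map-norm}, Theorem \ref{prop:prop_risol-bound} and Proposition \ref{prop:disk_automorphism}, and you have simply filled in the details of that chain. The monotonicity of $r\mapsto\rho$ in \eqref{eq:eq_relation_rho} that you flag is indeed the only point the paper leaves tacit, and it checks out: a direct computation gives $\partial\rho/\partial r = (1-r^2)\bigl(1+r^2-\sqrt{1+(4q^2-2)r^2+r^4}\,\bigr)/\bigl(2r^2\sqrt{1+(4q^2-2)r^2+r^4}\,\bigr)>0$ for $0<r<1$, $0\le q<1$.
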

\begin{proof} Straightforward consequence of Corollary \ref{cor:cor_conformal-map-norm}, Theorem \ref{prop:prop_risol-bound} and Proposition \ref{prop:disk_automorphism}.
\end{proof}

\begin{remark} We immediately see that
\[ \frac{\text{d}}{\text{d}q} \ell_q = - \frac{2 q \ell_0}{\sqrt{ 1 + (4  q^2 - 2 ) \ell_0^2 + \ell_0^4}} < 0 , \]
that is $\ell_q$ is increasing with respect to the ``depth'' $1- q$. See Fig. \ref{fig:andamento_ellq_3r} and Fig.s \ref{fig:figure_01}, \ref{fig:figure_02}, \ref{fig:figure_03} for various instances of the disks of indistinguishable perturbations starting from various values $r$ of the resolution limit in the center. 
\end{remark}

\begin{center}
\begin{figure}[H]
\centering
\includegraphics[scale=0.40]{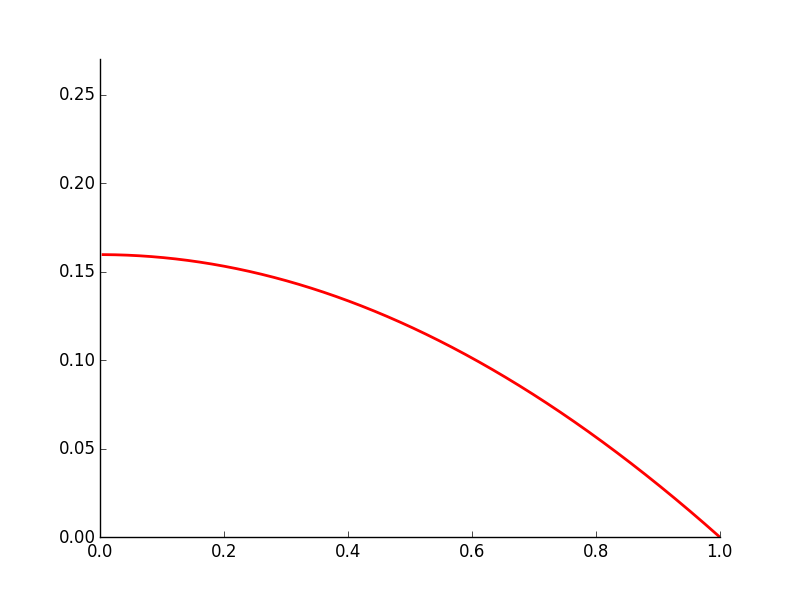}
\caption{$\ell_q$, as function of $q$, with $K = 10^2$ and $\varepsilon_0 = 10^{-1}$\label{fig:andamento_ellq_3r}}
\end{figure}

\begin{figure}[H]
\centering
\includegraphics[scale=0.29]{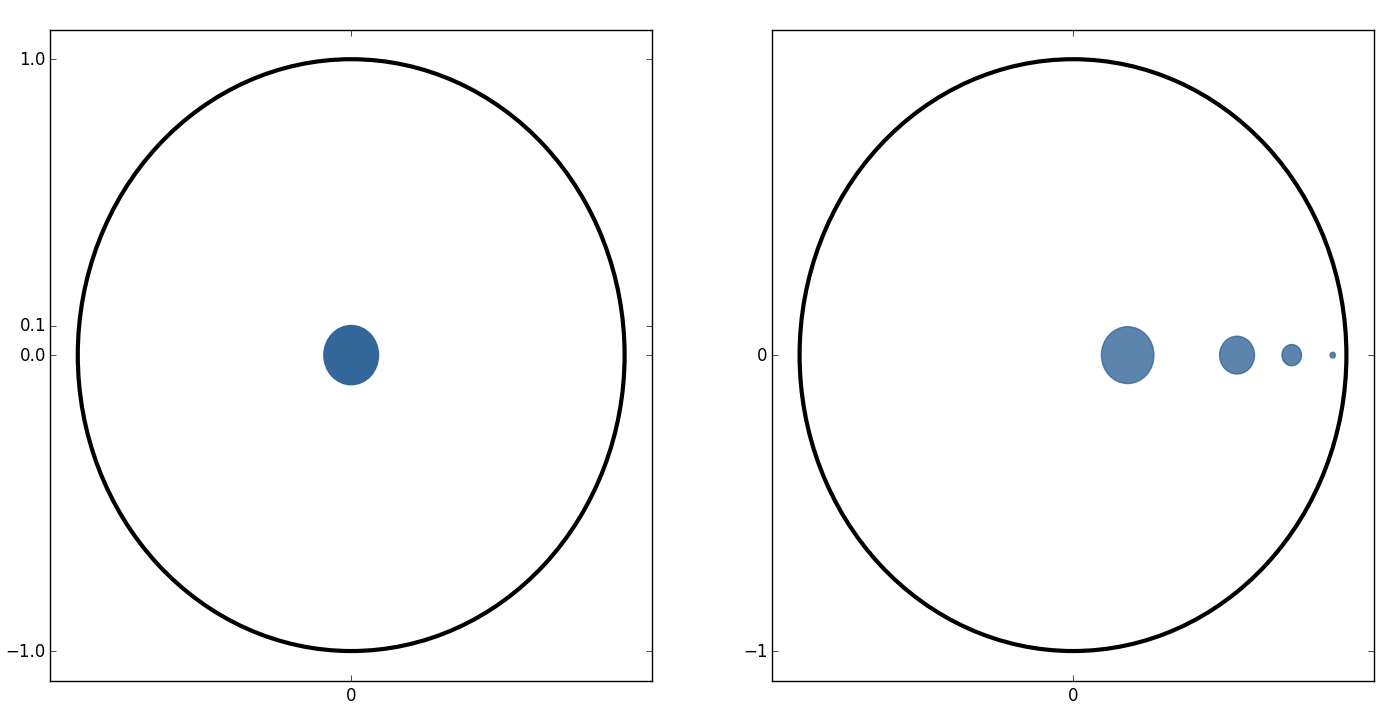}
\caption{$r = 0.1$ \label{fig:figure_01}}
\end{figure}

\begin{figure}[H]
\centering
\includegraphics[scale=0.29]{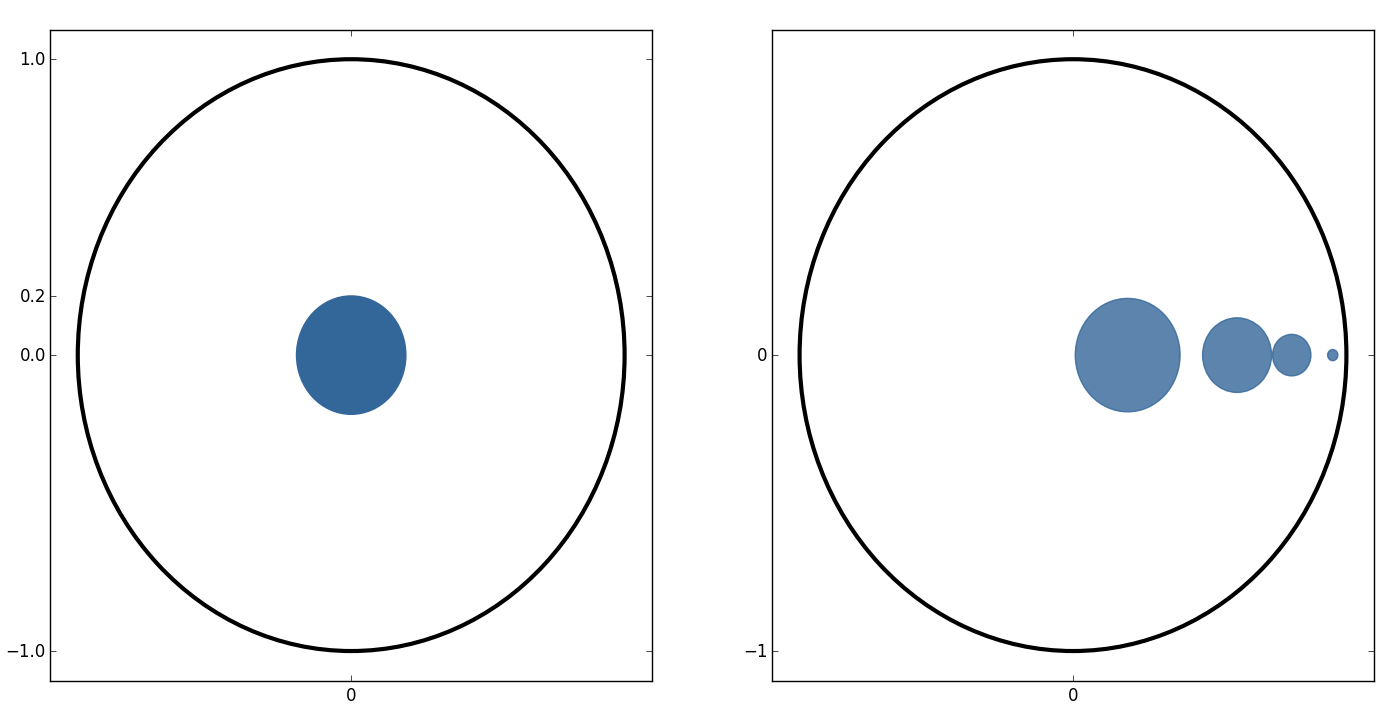}
\caption{$r = 0.2$ \label{fig:figure_02}}
\end{figure}

\begin{figure}[H]
\centering
\includegraphics[scale=0.29]{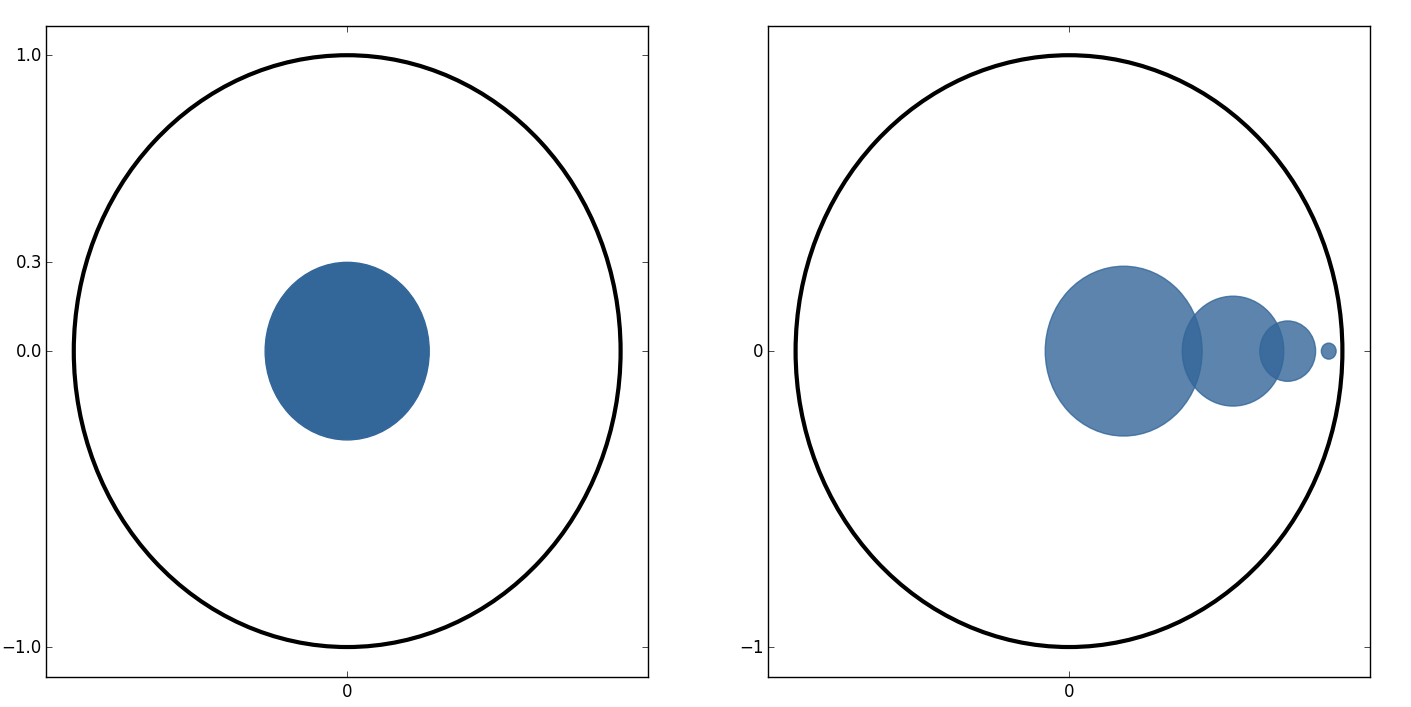}
\caption{$r = 0.3$\label{fig:figure_03}}
\end{figure}
\end{center}

\begin{remark} \label{prop:asymp_bhv} For \eqref{eq:resolution_limit_disk_q} we have the following asymptotic behaviour
 \begin{equation} \label{eq:asymp_bhv} \ell_q(\varepsilon_0 , K) = \dfrac{2 \ell_0}{1 + \ell_0^2} (1 - q) + o (1 - q)  \quad \text{ as } q \to 1 , \end{equation}
 where $\ell_0 = \ell_0 (\varepsilon_0 , K)$ is given in \eqref{eq:eq_risol-bound}.
\end{remark}

Now, our aim is to provide an explicit formula of the resolution limit in the case of the half plane $\mathbb{H}^+$. 

\begin{proposition} \label{prop:half_plane_mobius} Given $r \in (0,1)$, $q \in (0, + \infty)$ and $\alpha \in \RR$, there exists a Möbius transformation $f :  \overline{\mathbb{H}^+} \arr \overline{B_1(0)}$ such that $f(B_{\rho}(\alpha + iq)) = B_r(0)$, where \begin{equation} \label{eq:relazione-rhoqr} \rho = \dfrac{2 q r}{1 + r^2} . \end{equation}
\end{proposition}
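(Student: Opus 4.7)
The plan is to mirror the proof of Proposition \ref{prop:disk_automorphism}: first reduce the general center to a canonical one by an obvious symmetry of $\mathbb{H}^+$, then exhibit an explicit one-parameter family of Möbius maps $\overline{\mathbb{H}^+}\arr\overline{B_1(0)}$, and finally tune the parameter so that the image of $B_\rho(\alpha+iq)$ is a Euclidean disk centered at the origin.

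First I reduce to $\alpha=0$ by pre-composing with the horizontal translation $z\mapsto z+\alpha$, which is a Möbius self-map of $\mathbb{H}^+$. After this reduction it is enough to search within the natural family
\[ f_p(z)=\frac{z-ip}{z+ip},\qquad p\in(0,q), \]
each $f_p$ being a conformal map of $\overline{\mathbb{H}^+}$ onto $\overline{B_1(0)}$ that sends $ip$ to $0$.

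The key computation parametrizes the circle $\partial B_\rho(iq)$ as $z=iq+\rho e^{i\theta}$ and evaluates
\[ |f_p(z)|^2=\frac{|z-ip|^2}{|z+ip|^2}=\frac{\rho^2+(q-p)^2+2\rho(q-p)\sin\theta}{\rho^2+(q+p)^2+2\rho(q+p)\sin\theta}. \]
This ratio is independent of $\theta$ precisely when numerator and denominator are proportional as affine functions of $\sin\theta$, a condition that collapses to the single scalar equation $p^2=q^2-\rho^2$. Since $B_\rho(iq)\subset\mathbb{H}^+$ forces $\rho<q$, this uniquely selects $p=\sqrt{q^2-\rho^2}\in(0,q)$. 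With this choice the common value of $|f_p(z)|^2$ on $\partial B_\rho(iq)$ equals $\frac{q-p}{q+p}$, so the image is indeed a Euclidean disk centered at $0$ of radius $r=\sqrt{(q-p)/(q+p)}$.

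The remaining step is algebraic: solving $r^2=(q-\sqrt{q^2-\rho^2})/(q+\sqrt{q^2-\rho^2})$ for $\rho$ gives $\sqrt{q^2-\rho^2}=q(1-r^2)/(1+r^2)$ and then, via the identity $(1+r^2)^2-(1-r^2)^2=4r^2$, the announced formula $\rho=2qr/(1+r^2)$. I do not anticipate any real obstacle; the only sanity check is the inclusion $\rho<q$, which is automatic since $2r/(1+r^2)<1$ for every $r\in(0,1)$.
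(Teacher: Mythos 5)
Your proof is correct and follows essentially the same route as the paper: after translating $\alpha$ away, your family $f_p(z)=\frac{z-ip}{z+ip}$ is exactly the paper's $f_a(z)=\frac{z-a}{z-\overline{a}}$ with $a=\alpha+ip$, and the concluding algebra ($p=q\frac{1-r^2}{1+r^2}$, $\rho=\frac{2qr}{1+r^2}$) coincides. The only cosmetic difference is that you verify the Apollonius-circle property by direct parametrization of $\partial B_\rho(iq)$ rather than by reading off the preimage of $|w|=r$, which is the same computation run in the opposite direction.
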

\begin{proof} Up to rotations in the target, the generic Möbius transformation which maps $\mathbb{H}^+$ into $B_1(0)$ is given by
\[ f_a(z) = \frac{z - a}{z - \overline{a}} ,\]
for any $a  \in \mathbb{H}^+$. We have 
\[ \left | f_a(z) \right | = r \quad \text{ if and only if } \quad  \left | \frac{z - a}{z - \overline{a}} \right |^2 = r^2.\]
That is: $f_a^{-1}$ maps $B_r(0)$ onto $B_\rho (\alpha +i q) \subset \mathbb{H}^+$ with $q, \rho$ given by
\[ \begin{cases} q =  \beta \dfrac{1 + r^2}{1 - r^2}, \\ \\ \rho = \beta \dfrac{2  r}{1 - r^2},\end{cases} \]
where $a = \alpha + i \beta$. Viceversa, given $q$ and $r$, we can solve for $\beta$ and obtain
\[ \begin{cases}  \beta = q  \dfrac{1 - r^2}{1 + r^2} ,\\ \\ \rho = \dfrac{2 q r}{1 + r^2}.\end{cases} \]
and \eqref{eq:relazione-rhoqr} follows.
\end{proof}

\begin{theorem}[Depth dependent resolution in a half plane] Let $\Omega = \mathbb{H}^+$. The resolution limit at level $\varepsilon_0 > 0$, relative to the depth level $q$ (relative to any point in the half plane whose distance from $\partial \mathbb{H}^+$ is $q > 0$) is given by
\begin{equation}\label{eq:eq_risol-bound-semipiano}  \widetilde{\ell}_q = \dfrac{2 q \ell_0 }{1 + \ell_0^2} ,\end{equation}
where $k$ is the constant introduced in \eqref{eq:k} and $\ell_0$ as in \eqref{eq:eq_risol-bound}.
\end{theorem}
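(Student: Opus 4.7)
The plan is to mimic the proof of Theorem \ref{eq:resolution_limit_disk_q} (the interior-of-disk case) by reducing, via a Möbius change of variable, to the resolution problem at the center of the unit disk, which was already solved in Theorem \ref{prop:prop_risol-bound}. Fix any $p = \alpha + i q \in \mathbb{H}^+$ with $q > 0$. By Proposition \ref{prop:half_plane_mobius}, for each $r \in (0,1)$ there is a Möbius transformation $f : \overline{\mathbb{H}^+} \arr \overline{B_1(0)}$ which maps $B_{\rho}(p)$ onto $B_r(0)$, where
\[ \rho = \frac{2 q r}{1 + r^2} . \]
Composition $\gamma \mapsto \gamma \circ f^{-1}$ is a bijection between $\Gamma_{\mathbb{H}^+}(\rho, p)$ and $\Gamma_{B_1(0)}(r, 0)$ that swaps the respective extreme conductivities $\gamma_K$ and $\gamma_{K^{-1}}$.

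Next, I invoke Corollary \ref{cor:cor_conformal-map-norm}: the quantity $\|\Lambda_{\gamma_1} - \Lambda_{\gamma_2}\|_*$ is invariant under the pullback by $f$, so the pair $(\gamma_K, \gamma_{K^{-1}})$ in $\Gamma_{\mathbb{H}^+}(\rho, p)$ is $\varepsilon_0$-indistinguishable iff the corresponding pair in $\Gamma_{B_1(0)}(r, 0)$ is. By Theorem \ref{prop:prop_risol-bound} together with Corollary \ref{cor:cor_1}, this happens precisely when $r \le \ell_0$. The monotonicity stated in Lemma \ref{lem:lem_monotonicity} and its Corollary \ref{cor:cor_1} then identify $\widetilde{\ell}_q$ with the supremum of those $\rho$ corresponding to $r \le \ell_0$.

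To extract the closed form, I compute
\[ \frac{d}{dr}\!\left(\frac{2 q r}{1 + r^2}\right) = \frac{2 q (1 - r^2)}{(1 + r^2)^2} > 0 \quad \text{on } (0,1), \]
so $r \mapsto \rho$ is strictly increasing. Since $\ell_0 < 1$ in the meaningful range, the supremum is attained at $r = \ell_0$, giving $\widetilde{\ell}_q = 2 q \ell_0 / (1 + \ell_0^2)$. Note that the answer is independent of the horizontal coordinate $\alpha$, which reflects the translational symmetry of $\mathbb{H}^+$ along its boundary, and that as $r \to 1^-$ we get $\rho \to q$, consistent with the fact that $B_\rho(p)$ must stay inside the half plane.

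The only real point that requires care, rather than being routine, is that $\mathbb{H}^+$ is unbounded, so one must justify pulling back the D-N map through $f$ in a setting where the standard $H^{1/2}(\partial\Omega)$ norm is not even defined. This is, however, exactly what the quotient spaces $H^1_\Diamond$ and $H^{1/2}_\Diamond$ of Section 2 were introduced to handle, so Corollary \ref{cor:cor_conformal-map-norm} applies directly and the reduction goes through without modification.
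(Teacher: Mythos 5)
Your proof is correct and follows exactly the route the paper takes: its own proof is the one-line citation of Corollary \ref{cor:cor_conformal-map-norm}, Theorem \ref{prop:prop_risol-bound} and Proposition \ref{prop:half_plane_mobius}, which you have simply written out in full. The extra checks you supply (monotonicity of $r \mapsto 2qr/(1+r^2)$ so that the supremum over admissible $\rho$ is attained at $r=\ell_0$, and the remark that the $H^{1/2}_\Diamond$ framework is what legitimizes the conformal transfer to the unbounded domain) are exactly the details the paper leaves implicit.
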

\begin{proof} Immediate consequence of Corollary \ref{cor:cor_conformal-map-norm}, Theorem \ref{eq:eq_risol-bound} and Proposition \ref{prop:half_plane_mobius} .
\end{proof}
See Fig.s \ref{fig:figure_s005rit}, \ref{fig:figure_s01rit}, \ref{fig:figure_s015rit}.  For better interpreting the half plane as a 2D-model of the underground, the $y$-axis is oriented downwards.

\begin{figure}[H]
\centering
\includegraphics[scale=0.29]{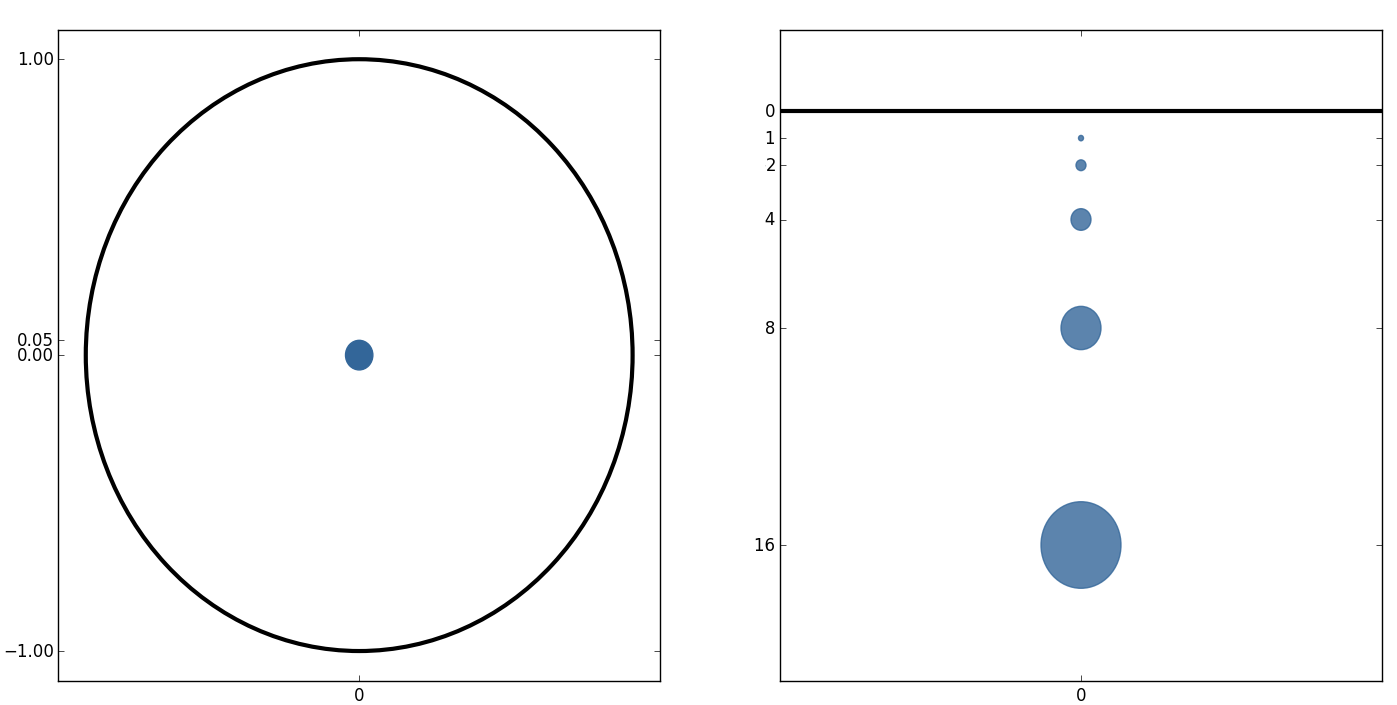}
\caption{$r = 0.05$\label{fig:figure_s005rit}}
\end{figure}

\begin{figure}[H]
\centering
\includegraphics[scale=0.29]{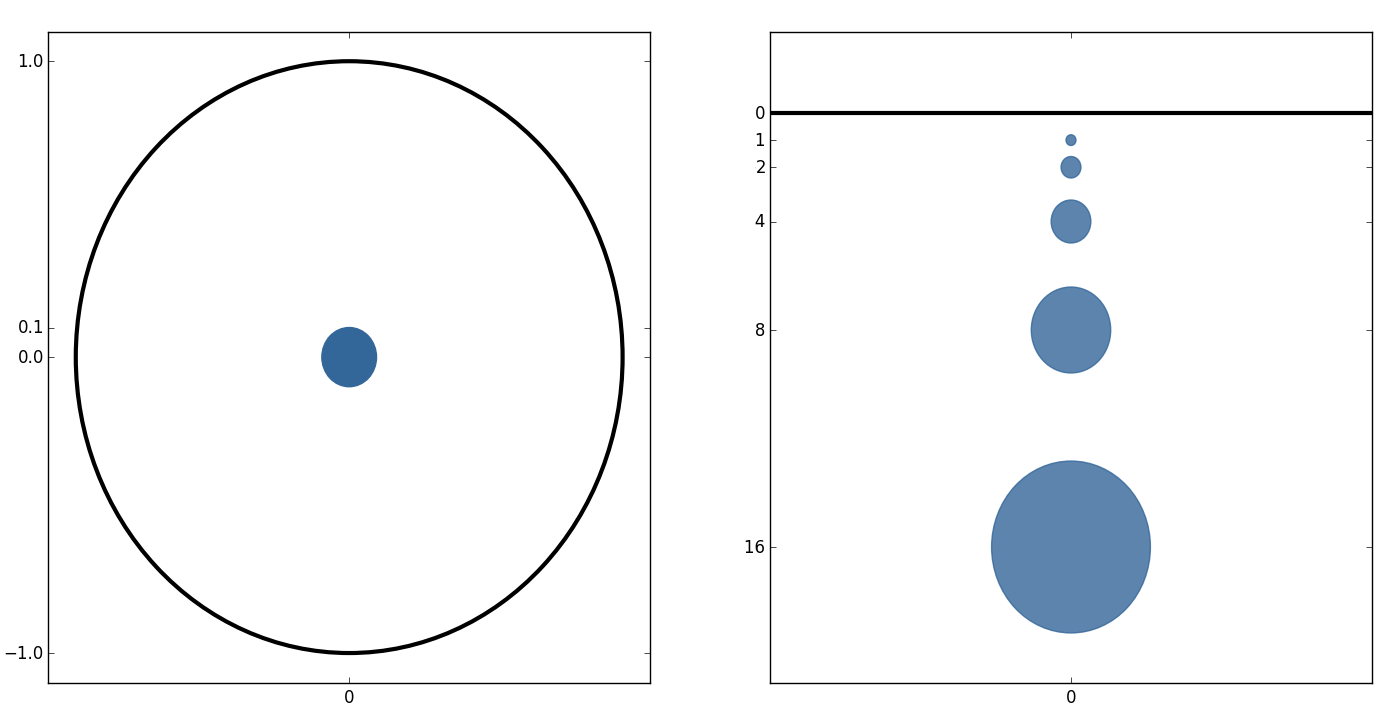}
\caption{$r = 0.1$\label{fig:figure_s01rit}}
\end{figure}

\begin{figure}[H]
\centering
\includegraphics[scale=0.29]{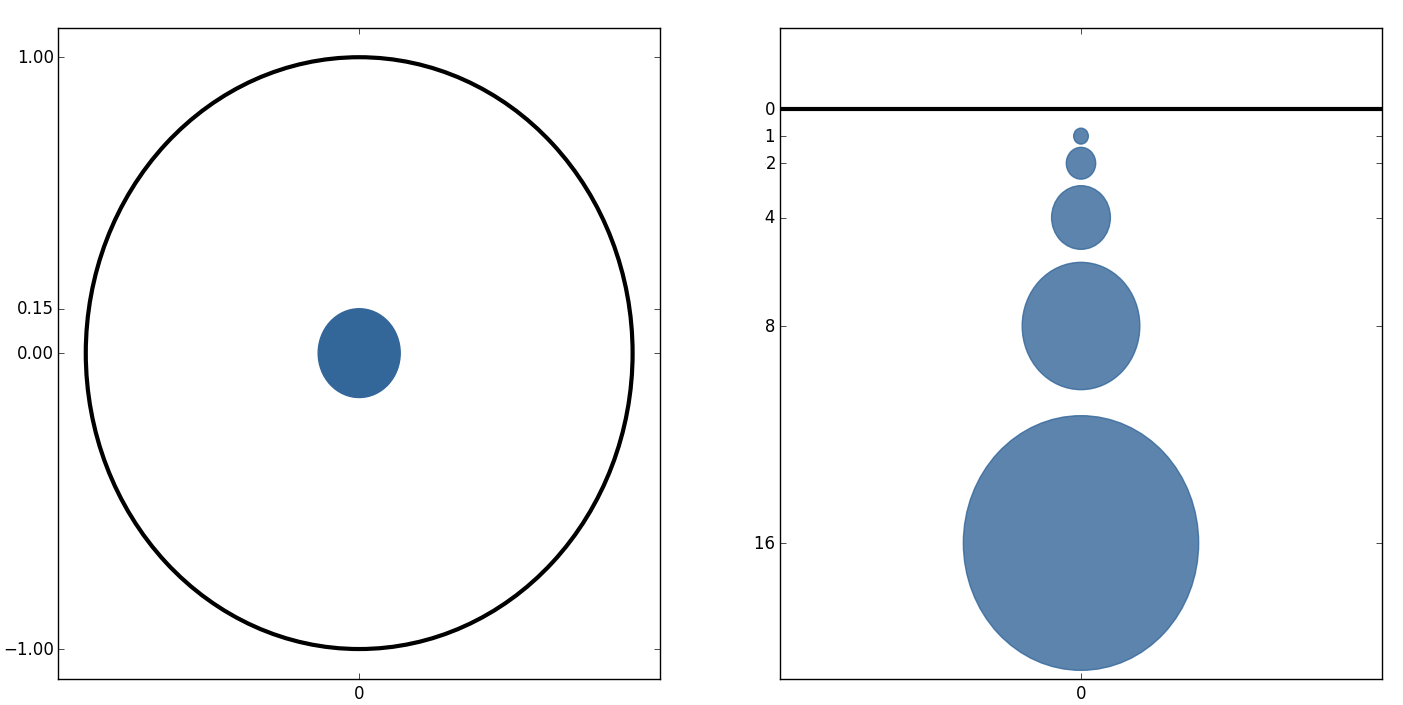}
\caption{$r = 0.15$\label{fig:figure_s015rit}}
\end{figure}

\begin{remark}
In the case of the half plane it is evident from \eqref{eq:eq_risol-bound-semipiano} that the resolution diverges linearly with respect to the depth, that is when $q \to + \infty$. See Fig. \ref{fig:semipiano_cono}.
\end{remark}
\begin{figure}[H]
\centering
\includegraphics[scale=0.29]{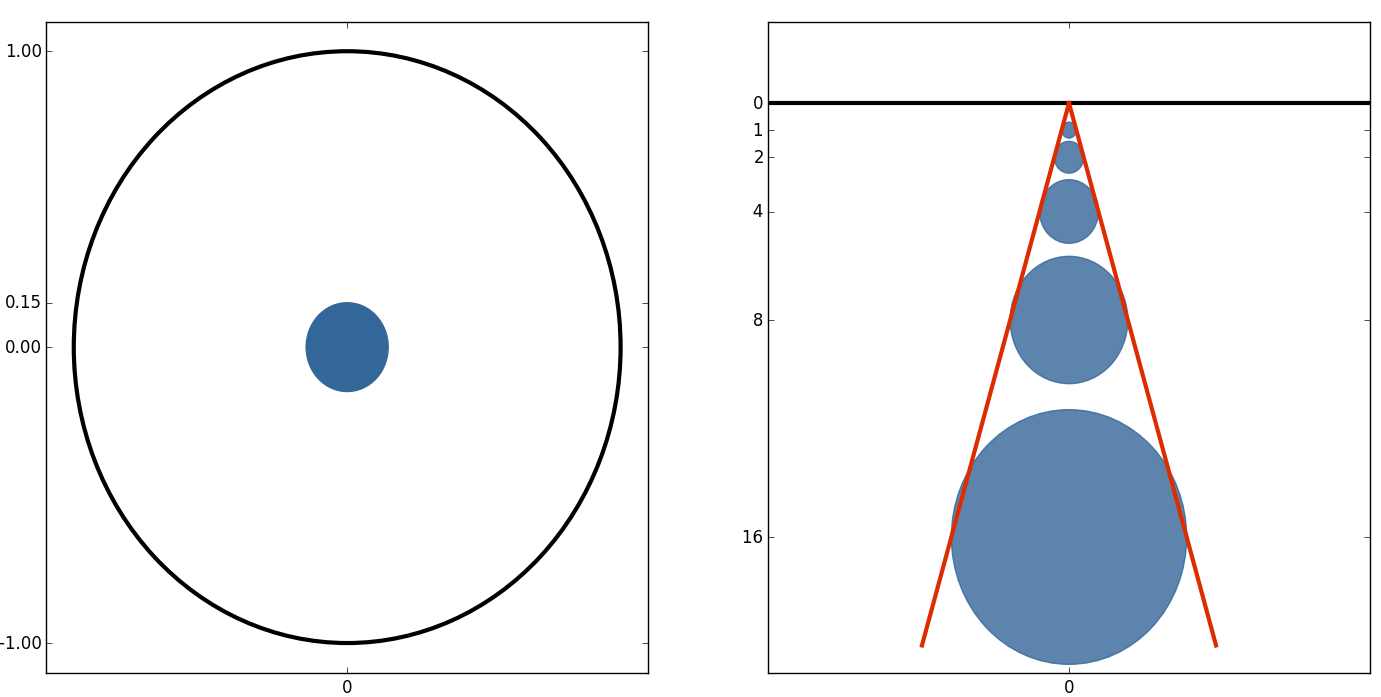}
\caption{The resolution cone, with $r = 0.15$. \label{fig:semipiano_cono}}
\end{figure}

Finally, we observe the following asymptotic behaviour of $\widetilde{\ell}_q $ as function of $\varepsilon_0$.

\begin{remark} Given $q > 0$ we have
\begin{equation} \widetilde{\ell}_q = \frac{q}{\sqrt{k}}  \left ( \sqrt{\epsilon_0} + O(\epsilon_0^{3/2})  \right ) \quad \text{ as } \epsilon_0 \to 0^+. \end{equation} 
\end{remark}


\begin{thebibliography}{999}

\bibitem{A88} G. Alessandrini, Stable determination of conductivity by boundary measurements, App. Anal. \textbf{27} (1988) 153--172.

\bibitem{A89} G. Alessandrini, Remark on a paper by H. Bellout and A. Friedman. Boll. Un. Mat. Ital. A (7), 3(2) (1989) 243--249.

\bibitem{AR98} G. Alessandrini, E. Rosset, The inverse conductivity problem with one measurement: bounds on the size of the unknown object. SIAM J. Appl. Math., 58(4) (1998)1060--1071.

\bibitem{ARS00} G. Alessandrini, E. Rosset, J. K. Seo, Optimal size estimates for the inverse conductivity
problem with one measurement. Proc. Amer. Math. Soc., \textbf{128}(1) (2000) 53--64.

\bibitem{A07} G. Alessandrini, Open issues of stability for the inverse conductivity problem, J. Inv. Ill-Posed
 Problems, \textbf{15} (2007) 1--10.
 
 \bibitem{AIM} K. Astala, T. Iwaniec, G. Martin, Elliptic partial differential equations and quasiconformal mappings in the plane. Princeton Mathematical Series, 48. Princeton University Press, Princeton, NJ, 2009.
 
 \bibitem{Bro} R. Brown, Recovering the conductivity at the boundary from the local
Dirichlet-to-Neumann map: A pointwise result. J. Inverse and Ill-Posed Prob.
9 (2001) 567--574.

\bibitem{Ca} A. P. Calder\'{o}n,
On an inverse boundary value problem, Seminar on Numerical
Analysis and its Applications to Continuum Physics (Rio de
Janeiro, 1980),   65--73, Soc. Brasil. Mat., Rio de Janeiro, 1980.
Reprinted in: Comput. Appl. Math.  \textbf{25}   no. 2--3 (2006)
133--138.

\bibitem{Fa} D. Faraco, M. Prats,
Characterization for stability in planar conductivities. arXiv:1701.06480.

\bibitem{GK} H. Garde, K. Knudsen, Distinguishability revisited: depth dependent bounds on reconstruction quality in electrical impedance tomography. arXiv:1602.03785.

\bibitem{Ike} M. Ikehata, Size estimation of inclusion, J. Inverse Ill-Posed Probl., \textbf{6}(2) (1998)127--140.

\bibitem{Isa} D. Isaacson, Distinguishability of conductivities by electric current computed tomography, IEEE Trans. Med. Imag., vol. MI-5 (1986) 91-95.

\bibitem{IC} D. Isaacson, M. Cheney, Distinguishability in impedance imaging, IEEE Trans Biomed Eng. 1992 Aug;39(8) (1992) 852--860.

\bibitem{KSS} H. Kang, J. K. Seo, D. Sheen, The inverse conductivity problem with one measurement: stability and estimation of size. SIAM J. Math. Anal., \textbf{28}(6) (1997) 1389--1405.

\bibitem{Man} N. Mandache, Exponential instability in an inverse problem for the Schr\"{o}dinger equation, Inverse Problems \textbf{17} (5) (2001) 1435--1444.

\bibitem{NUW} S. Nagayasu, G. Uhlmann, J-N. Wang, A depth-dependent stability estimate in electrical impedance tomography, Inverse Problems \textbf{25}    (7) (2009) 075001, 14 pp.

\bibitem{U} G. Uhlmann,
Electrical impedance tomography and Calder\'{o}n's problem
(topical review), Inverse Problems, \textbf{25} (12) (2009), 123011, 39 pp.

\bibitem{Sy-U-2} J. Sylvester and G. Uhlmann, Inverse boundary value problems at the boundary - continuous dependence, Comm. Pure Appl. Math. \textbf{41} (2) (1988) 197--219.

\end{thebibliography}
\end{document}